\documentclass{amsart}

\usepackage[backend=biber,style=alphabetic]{biblatex}
\usepackage[utf8]{inputenc}
\usepackage{amssymb}
\usepackage{amsthm}
\usepackage{amsmath}
\usepackage{quiver}
\usepackage{tikz} 
\usepackage{commath}
\usepackage{wasysym}
\usepackage{mathrsfs}
\usepackage[english]{babel}
\usepackage{tabularx}
\usepackage{tikz-cd}
\usepackage{multicol}
\usepackage[
backend=biber,
style=alphabetic,
]{biblatex}
\DeclareFieldFormat{labelalpha}{\mkbibbold{#1}}

\newtheorem{theorem}{Theorem}[section]
\newtheorem{proposition}[theorem]{Proposition}

\newtheorem{definition}[theorem]{Definition}

\newcommand{\R}{{\mathbb R}}

\newcommand{\T}{{\mathbb{T}}}

\newcommand{\inner}[1]{{\left\langle {#1} \right\rangle}}

\usepackage{geometry}
\title{Totally Geodesic Submanifolds in Products of Non-Positively Curved Manifolds}
\author{Nicholas Hanson}

\newcommand{\SFF}{{\textbf{II}_{\eta}}}

\newcommand{\PR}[1]{{\text{pr}_{#1}}}

\newtheorem{lemma}[theorem]{Lemma}
\newtheorem{corollary}[theorem]{Corollary}

\newtheorem{question}[theorem]{Question}

\begin{document}
\begin{abstract}
    We study non-positively curved closed manifolds $M$ and $n$-dimensional totally geodesic submanifolds of $M\times M$ which satisfy a transversality condition. We prove that, under some mild irreducibility requirements on $M$, if $M \times M$ admits infinitely many such submanifolds or just a single dense such submanifold, then $M$ is a locally symmetric space. In proving this, we prove a stronger version which only requires such submanifolds to exist in the universal cover $\widetilde M \times \widetilde M$. 
\end{abstract}

\maketitle

\section{Introduction}

\subsection{Background}

There is a general theme that if one can fit infinitely many special submanifolds into a larger manifold, then one can expect the larger manifold to have special properties. One particular example is the expectation that infinitely many maximal totally geodesic submanifolds in a negatively curved manifold will force the larger manifold to be locally symmetric. The goal of this paper is to prove a result in this direction for products of closed, non-positively curved manifolds.

Two recent examples of this phenomenon are the following. The first, a result of Fisher-Lafont-Miller-Stover is that for every $n \ge  3$, there exist infinitely many commensurability classes of finite-volume non-arithmetic hyperbolic n-manifolds for which the collection of all codimension one finite-volume totally geodesic submanifolds is finite but
nonempty \cite[Thm. 1.2]{FisherLaFont_2021}. This result was then quickly improved to obtain the following result of Bader-Fisher-Miller-Stover:

\begin{theorem}[{\cite[Thm. 1.1]{bader_arithmeticity_2020}}]\label{bfms}
    If $\Gamma < \text{SO}_0(n,1)$ is a lattice and the associated space $\text{SO}_0(n,1) / \Gamma$ contains infinitely many maximal totally geodesic subspaces of dimension at least $2$, then $\Gamma$ is an arithmetic subgroup.
\end{theorem}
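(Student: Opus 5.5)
The plan follows the strategy of Bader--Fisher--Miller--Stover. The idea is to show that if $\Gamma$ were non-arithmetic, the totally geodesic subspaces would force a rigid algebraic structure that in fact implies arithmeticity.

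\textbf{Setup.} Write $G=\text{SO}_0(n,1)$ and $\Gamma<G$ for the lattice. A maximal totally geodesic subspace of dimension $k\ge 2$ in $G/\Gamma$ (or in $\mathbb{H}^n/\Gamma$) lifts to a closed orbit $Hg\Gamma$. Here $H\cong \text{SO}_0(k,1)\times K'$ is a subgroup, and $H\cap g\Gamma g^{-1}$ is a lattice in the noncompact factor. Since infinitely many such subspaces exist and there are finitely many $k$, we pass to infinitely many with a fixed $k$. Each gives an $H$-invariant ergodic probability measure $\mu_i$ on $G/\Gamma$. We may take $H$ generated by unipotents, since $\text{SO}_0(k,1)$ with $k\ge2$ has unipotent elements.

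\textbf{Equidistribution.} Apply Ratner's measure classification and the Mozes--Shah / Dani--Margulis theorems on limits of homogeneous measures. After passing to a subsequence, $\mu_i$ converges to a homogeneous measure for a larger closed subgroup. Maximality of each subspace prevents the limit from being supported on a finite union of intermediate closed orbits. So the limit is the Haar measure on $G/\Gamma$, and the subspaces equidistribute.

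\textbf{Arithmeticity via superrigidity.} This is the main step. Suppose $\Gamma$ is not arithmetic. Following Margulis's criterion, it suffices to show that for some representation $\rho:\Gamma\to \mathbf{G}(k_v)$ given by a Galois conjugate or field embedding over the trace field, $\rho$ either extends continuously to $G$ or has bounded image. The existence of the totally geodesic pieces gives algebraic subgroups $\mathbf{H}_i$ whose intersections with $\Gamma$ are Zariski dense in $\mathbf{H}_i$.

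Combining equidistribution with an equivariant measurable map $G/\Gamma\times_\rho$ to a variety of subgroups or to a projective space yields a compatibility statement: a $\Gamma$-equivariant measurable map $\phi: G/P\to \mathbf{G}(k_v)/\mathbf{P}$ that is rational on many $H$-orbits. The relevant space is a boundary map or the space of algebraic data. Equidistribution of these $H$-orbits upgrades this almost-everywhere information to $G$-invariance. So $\phi$ is algebraic, and $\rho$ extends.

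The hardest part is this upgrade. It requires the algebraic representation-theoretic argument of Bader--Furman, which produces the "algebraic hull" of the cocycle and shows that it is invariant under the group generated by the $H_i$ conjugates. That group is all of $G$ because the orbits are dense. I would first try proving the $H$-invariance for each $\mu_i$, pass it to the limit in a suitable space of measures on $G/\Gamma\times V$, and then use Zariski density of the $H_i$ to conclude.

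Finally, applying this to every embedding of the trace field, Margulis's arithmeticity criterion yields that $\Gamma$ is arithmetic.
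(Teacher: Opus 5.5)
The paper gives no proof of this statement. It quotes it from Bader--Fisher--Miller--Stover as background, so the only benchmark is their argument. Your outline has the right architecture: closed $H$-orbits, equidistribution via Ratner and Mozes--Shah, a superrigidity theorem built on Bader--Furman's algebraic representations, and then Margulis's criterion. Two side remarks. The criterion must be applied at \emph{every} place of the adjoint trace field, non-archimedean ones included, not ``for some''. The equidistribution step is fine in outline, modulo non-divergence for non-uniform $\Gamma$ and the Mozes--Shah fact that the orbits eventually lie in the limiting $L$-orbit, which is what contradicts maximality. The problem is that the two steps carrying the real content are either missing or mis-stated.

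\textbf{First gap: you never connect the subspaces to $\rho$.} You do not explain why a totally geodesic subspace constrains a Galois-conjugate or $p$-adic representation at all. The Zariski density you invoke, of $\Delta_i=\Gamma\cap g_iHg_i^{-1}$ in its ambient group, lives in the source. What is needed is a \emph{non}-density statement in the target.
\begin{itemize}
\item Since $\text{Ad}(\Gamma)\subset\mathbf{G}(\ell)$ for the adjoint trace field $\ell$ (a number field by local rigidity), the Zariski closure $\mathbf{L}_i$ of $\text{Ad}(\Delta_i)$ is a proper $\ell$-subgroup.
\item So for $\rho$ obtained from $\text{Ad}$ via an embedding $\sigma:\ell\to k$, the group $\rho(\Delta_i)$ lies in $\mathbf{L}_i^\sigma(k)$. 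It therefore fixes a point of a compact $\mathbf{G}$-variety $V$, for instance $\text{Lie}(\mathbf{L}_i^\sigma)$ in a Grassmannian of the Lie algebra.
\item Only this lets you lift $\mu_i$ to an $H$-invariant measure on the bundle $G\times_\Gamma V$.
\item A weak-$*$ limit of these lifts is then $H$-invariant and, by your equidistribution step, projects to Haar measure on $G/\Gamma$. An ergodic component lives on a single orbit $\mathbf{G}/\mathbf{L}$ with $\mathbf{L}$ proper.
\end{itemize}
As written, ``proving the $H$-invariance for each $\mu_i$'' is vacuous, since $\mu_i$ is $H$-invariant by construction. If you meant the lift, that lift is exactly what has to be constructed, and it is the only point where the subspaces interact with $\rho$.

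\textbf{Second gap: the ``upgrade'' is not a density argument.}
\begin{itemize}
\item The boundary map $\phi:G/P\to\mathbf{G}(k)/\mathbf{P}$ is defined only almost everywhere. The spheres $g_i\,\partial\mathbb{H}^k\subset\partial\mathbb{H}^n=G/P$ are null sets, so ``rational on many $H$-orbits'' has no meaning.
\item Even formally there is no reason for such rationality, since $\Delta_i$ is a non-superrigid lattice in $\text{SO}_0(k,1)$.
\item The limit measure is invariant under one fixed $H$, not under the conjugates $g_iHg_i^{-1}$. Density of orbits does not assemble invariance under different conjugates on different orbits into $G$-invariance of any single object.
\end{itemize}
What is actually required is a theorem of the following form: if $\rho$ has unbounded Zariski-dense image and there is an $H$-invariant ergodic measure on $G\times_\Gamma\mathbf{G}/\mathbf{L}$, with $\mathbf{L}$ proper, projecting to Haar measure on $G/\Gamma$, then $k$ is archimedean and $\rho$ extends continuously to $G$. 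Proving that is the heart of the Bader--Fisher--Miller--Stover paper, a rank-one analysis comparing Bader--Furman gates for several subgroups of $G$. Your proposal names the tool but supplies no argument for this step, and the mechanism you sketch would not work.
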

This theorem can then be used to formulate some similar statements for totally geodesic submanifolds in hyperbolic manifolds. Another result was produced by Filip-Fisher-Lowe, who found that having infinitely many totally geodesic hypersurfaces in a negatively curved, real-analytic manifold yields a hyperbolic structure:

\begin{theorem}[{\cite[Thm. 3]{filip_finiteness_2024}}]\label{ffl}
    If $M$ is a closed, negatively curved, real-analytic Riemannian manifold of dimension at least $3$ which contains infinitely many totally geodesic immersed hypersurfaces, then it must be that $M$ is locally symmetric and hyperbolic.
\end{theorem}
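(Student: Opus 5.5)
Since the statement above is quoted from Filip--Fisher--Lowe, the plan is to reconstruct the strategy of \cite{filip_finiteness_2024}: turn the infinitely many totally geodesic hypersurfaces into a rigid invariant structure on the unit tangent bundle, and then use that rigidity to force constant curvature. First, let $\{N_i\}$ be the infinitely many immersed totally geodesic hypersurfaces. Each $N_i$ has a unit normal bundle, which lifts to a closed subset of the Grassmann bundle $\mathrm{Gr}_{n-1}(TM)$ of tangent hyperplanes. This subset is invariant under the natural ``geodesic flow'' on hyperplanes: along any geodesic tangent to the hyperplane, the hyperplane is transported to hyperplanes tangent to $N_i$. Using compactness of $\mathrm{Gr}_{n-1}(TM)$, I would take an accumulation set $Z$ of these tangent-plane sets. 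Negative curvature enters here through a closing/Anosov argument (equidistribution in the spirit of the measure classification used in the proof of Theorem~\ref{bfms}), which should show that $Z$ is large. Concretely, the goal is that $Z$ supports, over an open (then, by real analyticity, full) set of points, a positive-dimensional family of tangent hyperplanes that are tangent to local totally geodesic hypersurfaces.

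Second, I would make this local. Real analyticity of the metric should imply that the set of hyperplanes $P\subset T_pM$ which are tangent to a local totally geodesic hypersurface is a real-analytic subvariety of $\mathrm{Gr}_{n-1}(TM)$. The reason is that the integrability condition is an analytic system: the curvature tensor must preserve $P$, i.e.\ $R(X,Y)Z\in P$ for $X,Y,Z\in P$, together with all of its covariant derivatives. Since this subvariety contains $Z$ and $Z$ has nonempty interior in some fiber, analyticity propagates it. In the end every hyperplane at every point should be tangent to a totally geodesic hypersurface, i.e.\ every hyperplane is curvature-invariant.

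Third, I would apply Cartan's axiom of $(n-1)$-planes. A Riemannian manifold of dimension $n\ge 3$ in which every tangent hyperplane is tangent to a totally geodesic hypersurface has constant sectional curvature. Pointwise, $R(X,Y)Z\in\{X,Y,Z\}^{\perp\perp}$ for all triples forces $R$ to be a multiple of the identity curvature tensor, and Schur's lemma then makes the multiple constant. Negative curvature makes the constant negative, so $M$ is hyperbolic and locally symmetric.

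The main obstacle is the first step. One must show that the limit of infinitely many hypersurfaces is not just a single hypersurface, and it could be if they all accumulated on finitely many. This requires an equidistribution statement for the hyperplane-lifts under a non-homogeneous Anosov-type flow, which has no Ratner theory available. I would try to get around this with a superrigidity/cocycle argument. The idea is to use the derivative cocycle of the frame flow restricted to $Z$: by negative curvature this cocycle is fiber-bunched, and its invariant structure along $Z$ should force $Z$ to saturate entire fibers of the Grassmann bundle.
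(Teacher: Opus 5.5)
The paper does not prove this statement. It is quoted from \cite{filip_finiteness_2024}, and the only indication of method is the remark that the argument there studies the leaves of the Anosov foliation of the geodesic flow. Judged on its own, your Steps 2--3 are sound in outline. For an analytic metric, tangency of $P$ to a local totally geodesic hypersurface is the closed analytic condition that $P$ be invariant under every $(\nabla^kR)_p$, and Cartan's axiom of hyperplanes plus Schur then gives constant negative curvature. However, the hand-off from Step 1 to Step 2 is misstated. The identity principle makes the analytic set $A\subset\mathrm{Gr}_{n-1}(TM)$ of such planes everything only if $A$ has nonempty interior in the total space. Interior in one fiber does not suffice: a metric rotationally symmetric about $p$ has every hyperplane at $p$ tangent to a totally geodesic hypersurface. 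Nor does your stated target of a positive-dimensional family over an open set suffice. The metric $dt^2+e^{2t}g_{\mathbb{H}^{n-1}}$ is negatively curved and not of constant curvature, yet at every point it has an $(n-2)$-dimensional family of totally geodesic hyperplanes, tangent to the fixed sets $\mathbb{R}\times\mathbb{H}^{n-2}$ of reflections. So Step 1 must show that the closure $Z$ of the tangent planes of the $N_i$ contains an open subset of $\mathrm{Gr}_{n-1}(TM)$.

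That is the genuine gap: Step 1 carries all of the content, you give no argument for it, and the routes you suggest do not supply one.
\begin{itemize}
\item Equidistribution of the $T^1N_i$ is a Ratner/Mozes--Shah phenomenon with no available analogue for a non-homogeneous geodesic flow, as you note.
\item In the cocycle route, fiber bunching is not the issue. The natural cocycle on the hyperplane data is parallel transport, which is isometric. The derivative cocycle of the geodesic flow is fiber bunched in general only under pinching.
\item The real obstruction is that the invariance principle (Ledrappier, Avila--Viana) gives stable/unstable holonomy invariance of conditional measures only over a base measure on $T^1M$ with local product structure, such as Liouville. A weak-$*$ limit of measures carried by the $T^1N_i$ is merely some flow-invariant measure, possibly supported on a proper closed invariant set. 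Showing it has product structure is precisely the missing equidistribution statement.
\item Even granting holonomy invariance, saturating the fibers would still require the transitivity group of the frame flow to act transitively on hyperplanes of $v^\perp$. That needs its own proof for a general negatively curved metric.
\end{itemize}
What is missing is a mechanism that, from distinct $N_i$ accumulating on one another, produces totally geodesic tangent planes in genuinely new directions. One candidate is to use that $T^1N_i$ meets each strong unstable leaf of the geodesic flow in a codimension-one subleaf, and to track how nearby such subleaves separate under the flow. This is the kind of argument the paper's reference to the Anosov foliation points to; without it the proposal is a plan rather than a proof.
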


Inspired by these results, Fisher asked the following general question (in personal communication):
\begin{question}\label{fisher-question}
    If $M$ is negatively curved and $M \times M$ contains infinitely many closed, totally geodesic submanifolds of dimension $n$ that do not lie entirely within the factors, must it be locally symmetric?
\end{question}

We prove the following positive result for the question:

\begin{theorem}\label{first-theorem}
    Let $M$ be a non-positively curved, closed and connected $n$-manifold (with  $n\ge 2$) which is topologically irreducible and admits no Euclidean factors. Assume there exists infinitely many closed, totally geodesic immersed $n$-dimensional submanifolds in the product $M\times M$ that are transverse to the projection maps. \\
    
    \noindent Then $M$ is locally symmetric. Furthermore, $M$ is an arithmetic manifold.
\end{theorem}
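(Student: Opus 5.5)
Transversality to both projections means that an $n$-dimensional totally geodesic submanifold $N\subset M\times M$ has tangent spaces that are graphs of linear isomorphisms $T_pM\to T_qM$. So, after lifting to universal covers, each such $N$ is the graph of a local diffeomorphism $f$, and $f$ is in fact a global diffeomorphism $\widetilde M\to\widetilde M$. The plan is therefore: translate the totally geodesic condition into a rigid geometric condition on $f$; use infinitely many such graphs to produce a large group acting on $\widetilde M$; conclude local symmetry via rank rigidity; and get arithmeticity from Margulis commensurator rigidity.

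\textbf{Step 1 (the graph condition).} Let $\widetilde N\subset \widetilde M\times\widetilde M$ be a lift of a closed totally geodesic $N$ that is transverse to both projections. The projections $\widetilde N\to \widetilde M$ are local diffeomorphisms, and $\widetilde N$ is complete, being the universal cover of a closed manifold. Since a geodesic in a product is a pair of geodesics, each projection sends geodesics of $\widetilde N$ to geodesics of $\widetilde M$, with speeds rescaled by the linear factor. I expect to check that both projections are covering maps, hence diffeomorphisms because $\widetilde M$ is simply connected. Then $\widetilde N=\mathrm{graph}(f)$ with $f$ a diffeomorphism carrying geodesics to geodesics, so $f$ is affine. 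By de Rham decomposition and the no-Euclidean-factor hypothesis, an affine map of an irreducible nonflat Hadamard manifold is a homothety. If $\widetilde M$ is reducible, then the Cartan--Hadamard/de Rham factors get permuted with possibly different scalings; topological irreducibility of $M$ should be used to control this. Compactness (cocompact isometry group) forbids nontrivial homotheties, so $f$ is an isometry of $\widetilde M$. Closedness of $N$ forces $f$ to commensurate $\Gamma=\pi_1(M)$: the stabilizer of $\widetilde N$ in $\Gamma\times\Gamma$ is cocompact on the graph, which gives $f\Gamma f^{-1}\cap\Gamma$ of finite index in both.

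\textbf{Step 2 (infinitely many graphs give a non-discrete group).} Distinct closed $N$ yield isometries $f$ that are distinct modulo $\Gamma$ on either side, since $\gamma f\gamma'$ gives the same image in $M\times M$. So the commensurator $\mathrm{Comm}_{\mathrm{Isom}(\widetilde M)}(\Gamma)$ contains infinitely many double cosets $\Gamma f\Gamma$. If $\mathrm{Isom}(\widetilde M)$ were discrete, then $\Gamma$ would have finite index in it and the commensurator would be a finite union of cosets, a contradiction. Hence $\mathrm{Isom}(\widetilde M)$ is non-discrete, and its identity component $G^0$ is nontrivial.

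\textbf{Step 3 (symmetry).} $G^0$ is a connected Lie group normalized by a cocompact lattice. By Eberlein's results (or Farb--Weinberger's ``isometries of universal covers'' theorem), $\widetilde M$ then splits isometrically as a symmetric space times a factor with discrete isometry group. Irreducibility and the absence of Euclidean factors then force $\widetilde M$ to be symmetric, so $M$ is locally symmetric. Finally, $\Gamma$ is an irreducible lattice in a semisimple group whose commensurator is dense, or at least non-discrete; by the density argument via Zariski closure, Margulis' commensurator criterion gives arithmeticity. (Rank one is allowed: Margulis works for all ranks.)

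\textbf{Main obstacle.} I expect Step 2, combined with the discreteness dichotomy in Step 3, to be the crux. In particular one must rule out the possibility that the infinitely many $f$'s fall into finitely many $\Gamma$-double cosets while still producing distinct immersed submanifolds, which is a bookkeeping check. One must also justify the Farb--Weinberger style splitting in exactly the nonpositively curved, possibly reducible setting. The homothety-versus-isometry step in the reducible case is the other delicate point.
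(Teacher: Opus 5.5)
Your outline follows the paper's architecture. You realize each lifted submanifold as the graph of a self-map $f$ of $\widetilde M$, upgrade $f$ to an isometry, and use infinitely many $\Gamma$-double cosets to make $\mathrm{Isom}(\widetilde M)$ non-discrete. Then you apply Farb--Weinberger and finish with Margulis' commensurator criterion.

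When $\widetilde M$ is de Rham irreducible, your argument is correct and somewhat cleaner than the paper's:
\begin{itemize}
\item The paper pushes parallel transport through $\mathrm{pr}_1$ and invokes Berger's classification, handling symmetric $S$ separately via Proposition~\ref{symmetric_semi-diag}. You observe instead that the graph is totally geodesic exactly when $f$ is affine. Then $f^*g$ is a parallel symmetric tensor, hence $c\,g$ by irreducibility of the holonomy, with no Berger needed.
\item You run the homothety-to-isometry step on $f=F_2\circ F_1^{-1}$, which is the right object. The paper's Step III asserts that $\mathrm{pr}_1\circ\iota$ itself is an isometry from $(S,g_S)$. That already fails for the diagonal, where $g_S=2F^*g$; Step IV only needs your statement.
\item The covering claim you left open follows as in Lemma~\ref{stretch_lemma}, from $F_i\circ\exp_p=\exp_{F_i(p)}\circ d_pF_i$ and Cartan--Hadamard.
\item The double-coset bookkeeping is harmless. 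The image of $N$ in $M\times M$ is the image of $\mathrm{graph}(f)$, which depends only on $\Gamma f\Gamma$, so distinct images give distinct double cosets. The statement must be read with distinct images anyway, since finite covers of the diagonal would otherwise be counterexamples.
\end{itemize}

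The genuine gap is the metrically reducible case, which your hypotheses do not exclude. Topological irreducibility of $M$ does not make $\widetilde M$ de Rham irreducible (think of irreducible lattices in products). In that case $f$ need not be an isometry, so no use of topological irreducibility can ``control'' the scalings as you hope.

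For example, take an irreducible cocompact arithmetic $\Gamma<\mathrm{PSL}_2(\R)^2$ coming from a quaternion algebra over a real quadratic field with Galois-stable ramification. Its commensurator contains an element interchanging the factors. Let $\Gamma$ act on $\mathbb{H}^2\times\mathbb{H}^2$ with metric $g\oplus\lambda g$, $\lambda\neq 1$. That element is affine, since rescaling one factor does not change the Levi-Civita connection. Its graph descends to a closed totally geodesic semi-diagonal submanifold of $M\times M$, yet it is not an isometry. The paper's Step V has the same problem.

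The repair is short, because you never need $f$ itself to be an isometry, only that $[\mathrm{Comm}_{\mathrm{Isom}(\widetilde M)}(\Gamma):\Gamma]=\infty$.
\begin{itemize}
\item Affine maps permute the de Rham factors. The factor-preserving ones form a normal subgroup $\mathrm{Aff}_0$ of finite index in $\mathrm{Aff}(\widetilde M)$.
\item These are isometries. Each restricts to an affine self-map of an irreducible factor, hence a homothety. A non-isometric homothety is impossible on a nonflat factor of bounded curvature, since iterating it makes the curvature blow up.
\item By pigeonhole, infinitely many $f_i$ lie in one coset of $\mathrm{Aff}_0$. Fix one of them, $f_{i_0}$. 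The maps $f_if_{i_0}^{-1}$ are then isometries commensurating $\Gamma$.
\item The cosets $\Gamma f_if_{i_0}^{-1}$ are pairwise distinct because the $\Gamma f_i$ are.
\end{itemize}
Your Steps 2 and 3 and the Margulis argument then go through unchanged.
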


In answering this question, we prove the following, which is the Main Theorem of the paper and of which Theorem \ref{first-theorem} is a corollary: 
\begin{theorem}\label{main-theorem}
     Let $M = \widetilde M / \Gamma$ be a non-positively curved, closed and connected Riemannian manifold which is topologically irreducible (i.e. is not finitely covered by a product space) and which has no local Euclidean factors. Assume there exists an infinite number of $\Gamma$-distinct totally geodesic $n$-dimensional submanifolds of $\widetilde M \times \widetilde M$ which are transverse to the projection maps in $\widetilde M \times \widetilde M$.\\
     
     \noindent Then $M$ is locally symmetric.
\end{theorem}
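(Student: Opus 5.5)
A totally geodesic $n$-dimensional submanifold $N\subset \widetilde M\times\widetilde M$ that is transverse to both projections is locally the graph of a map; I will show it is globally the graph of an isometry. Since $N$ is complete and totally geodesic and $\widetilde M\times\widetilde M$ is Hadamard, $N$ is itself a Hadamard manifold. Transversality makes $\PR{1}|_N$ and $\PR{2}|_N$ local diffeomorphisms.

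\textbf{Step 1: $N$ is the graph of a homothety.} Write a tangent vector to $N$ as $(v,Av)$, where $A=d\PR{2}\circ (d\PR{1}|_N)^{-1}$. Compare the Gauss equation for $N$ (with vanishing second fundamental form) with the product curvature. This gives, for tangent planes $\sigma\subset T_pN$,
\[ K_N(v,w)=\frac{R_1(v,w,w,v)+R_2(Av,Aw,Aw,Av)}{|v\wedge w|^2+|Av\wedge Aw|^2}. \]
Because $N$ is totally geodesic, geodesics of $N$ project to geodesics in both factors, so $\PR{2}\circ(\PR{1}|_N)^{-1}$ is locally affine, i.e. it preserves the Levi-Civita connections. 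An affine map between irreducible (de Rham) Riemannian manifolds with no Euclidean factor is a homothety; here I use the de Rham decomposition, with no flat factor by hypothesis. If the universal cover is reducible, I work factorwise; topological irreducibility of $M$ will later rule out mixing. By completeness, and since a local affine diffeomorphism of Hadamard manifolds is a covering, hence a diffeomorphism, the map $\PR{2}\circ(\PR{1}|_N)^{-1}$ is a global homothety $\phi:\widetilde M\to\widetilde M$. As $\widetilde M$ covers a compact manifold and is not flat, the scale must be $1$: otherwise the volume growth and curvature bounds, which are uniform because of the cocompact action, would be rescaled. So $\phi\in\mathrm{Isom}(\widetilde M)$ and $N=\mathrm{graph}(\phi)$.

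\textbf{Step 2: translate the hypothesis into the isometry group.} The group $\Gamma\times\Gamma$ acts on graphs by $\mathrm{graph}(\phi)\mapsto \mathrm{graph}(\gamma_2\phi\gamma_1^{-1})$. Hence ``infinitely many $\Gamma$-distinct'' submanifolds means infinitely many double cosets $\Gamma\phi\Gamma$ in $\mathrm{Isom}(\widetilde M)$. If $\mathrm{Isom}(\widetilde M)$ were discrete, $\Gamma$ would have finite index in it, since the quotient is compact. Then there would be only finitely many double cosets, a contradiction. Therefore $G=\mathrm{Isom}(\widetilde M)$ is a non-discrete Lie group acting cocompactly, with an infinite set of double cosets.

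\textbf{Step 3: conclude local symmetry (main obstacle).} Here I invoke the Eberlein / Farb--Weinberger type rigidity: if the universal cover of a closed non-positively curved manifold has isometry group that is not discrete modulo $\Gamma$, i.e. $\Gamma$ has infinite index in $\mathrm{Isom}(\widetilde M)$, then $\widetilde M$ is a symmetric space, or a product with a symmetric factor, or has a Euclidean factor. Concretely, let $G^0$ be the identity component of $G$. It is nontrivial and normalized by $\Gamma$, and its orbits give a $\Gamma$-invariant foliation. By Eberlein's results, $\widetilde M$ splits isometrically as $\widetilde M_s\times \widetilde M'$, with $\widetilde M_s$ symmetric of noncompact type. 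Here $G^0$ acts transitively on $\widetilde M_s$ and trivially on $\widetilde M'$, and $\Gamma$ virtually respects the splitting. Topological irreducibility of $M$, together with the absence of Euclidean factors, then forces $\widetilde M'$ to be a point, so $M$ is locally symmetric. The delicate part is exactly this splitting step: the reducible-but-topologically-irreducible case needs the fact that an irreducible lattice cannot preserve a nontrivial splitting with a nonsymmetric factor. I would cite Eberlein's theorem on lattices in nonpositively curved manifolds rather than reprove it.
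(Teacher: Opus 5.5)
Your proposal is correct and has the same skeleton as the paper. Each transverse totally geodesic $n$-submanifold is the graph of an isometry of $\widetilde M$; infinitely many $\Gamma$-distinct graphs force $\mathrm{Isom}(\widetilde M)$ to be non-discrete; a rigidity theorem finishes. The genuine difference is how you obtain the homothety.

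\begin{itemize}
\item The paper shows that $F=\PR{1}\circ\iota$ intertwines parallel transport. It then invokes Berger's classification (the holonomy of an irreducible non-symmetric $S$ is transitive on unit spheres) to make $\|dF(v)\|$ constant. It also needs a separate proposition to dispose of symmetric $S$.
\item You use only that $\phi$ is affine. Then $\phi^*g$ is a $\nabla$-parallel symmetric $2$-tensor, hence holonomy-invariant, and irreducibility of the holonomy representation (a Schur-type argument) forces $\phi^*g=c\,g$.
\end{itemize}

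Your route is more elementary and treats symmetric and non-symmetric cases uniformly, so both Berger's theorem and the paper's special-case proposition become unnecessary. The remaining steps match the paper:
\begin{itemize}
\item Your double-coset count is equivalent to the paper's argument with distinct points of $G/\Gamma$.
\item Your rescaling argument for $c=1$ is the paper's Step III: compare the compact set of sectional curvatures of $g$ with that of $c\,g$.
\item For the last step you need not sketch Eberlein's structure theory. Asphericity, topological irreducibility, absence of normal abelian subgroups (via the Flat Torus Theorem) and non-discreteness are exactly the hypotheses of Farb--Weinberger's Theorem 1.3, which is what the paper cites.
\end{itemize}

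Two small corrections.

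First, your displayed Gauss-equation formula has the wrong denominator. It should be $|(v,Av)\wedge(w,Aw)|^2$, which exceeds $|v\wedge w|^2+|Av\wedge Aw|^2$ by $|v|^2|Aw|^2+|Av|^2|w|^2-2\langle v,w\rangle\langle Av,Aw\rangle$. For the diagonal ($A=\mathrm{Id}$) your formula gives $K$ instead of the correct $K/2$. You never use the formula, so just delete it.

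Second, ``work factorwise'' in the de Rham reducible case does not by itself produce an isometry. The map $\phi$ may permute factors with different homothety constants, and the curvature argument only shows that the product of the constants around each permutation cycle is $1$. The clean fix is the Eberlein theorem you already allude to: a compact, topologically irreducible, non-positively curved $M$ whose universal cover is de Rham reducible with no Euclidean factor is already locally symmetric. That case therefore needs no submanifolds at all.
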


Here, $\Gamma$-distinct means, loosely, that for two submanifolds in the universal cover, they are distinct from the perspective of the quotient $\widetilde M / \Gamma$. See Definition \ref{gamma-distinct} for a precise definition. Also note that the "transversality of factors" condition is necessary. Manifolds of the form $M\times \{p\} \subset M\times M$ are totally geodesic and any non-discrete Riemannian manifold $M$ admits infinitely many such submanifolds. There are further examples which are less trivial; see the discussion before Definition \ref{semidiagonal} for more information.

We will also prove a version of this theorem for dense totally geodesic submanifolds. Surprisingly, one only needs a single totally geodesic submanifold to prove that $M$ is locally symmetric:
\begin{theorem}\label{dense-theorem}
    Let $M$ be a non-positively curved, closed and connected Riemannian manifold of dimension $n$ which is topologically irreducible and contains no Euclidean factors. Assume there exists a dense, totally geodesic $n$-dimensional submanifold\\ $S\subset M \times M$.\\
    
    \noindent Then $M$ is locally symmetric.
\end{theorem}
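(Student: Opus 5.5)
Here is the plan for Theorem \ref{dense-theorem}: reduce it to a rigidity statement about the geodesic flow, or more directly about an isometry-like correspondence on $\widetilde M$. The key point is that transversality is automatic here and gives a graph structure.

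\textbf{Step 1: the lift is a graph.} Take a component $\widetilde S$ of the preimage of $S$ in $\widetilde M\times\widetilde M$. It is a complete totally geodesic $n$-dimensional submanifold of a Hadamard manifold, so it is a closed embedded convex subset diffeomorphic to $\R^n$. We first check that $\widetilde S$ is transverse to both projections. If at some point $T\widetilde S$ contained a nonzero vector $(v,0)$, the set of such points would be closed and, by density of $S$ and the $\Gamma\times\Gamma$-equivariance, we want to push it everywhere. This would force $\widetilde S$ to contain flat strips of the form $\gamma\times\{p\}$. Ruling this out via density should use the absence of Euclidean factors. A cleaner route is to use the second fundamental form: $\SFF=0$ together with the product curvature splitting shows that $\ker d\PR{2}|_{T\widetilde S}$ has locally constant rank and is a parallel distribution. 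Density then forces the tangent space to split off a factor, and that is excluded by irreducibility. With transversality in hand, each projection $\PR{i}:\widetilde S\to\widetilde M$ is a local diffeomorphism from a complete manifold. A comparison/properness argument using convexity makes it a covering, hence a diffeomorphism. So $\widetilde S=\mathrm{graph}(f)$ for a diffeomorphism $f:\widetilde M\to\widetilde M$.

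\textbf{Step 2: $f$ is affine.} For a graph $\{(x,f(x))\}$ in a Riemannian product, being totally geodesic is equivalent to $f$ mapping geodesics to geodesics with affine reparametrization: the geodesic $t\mapsto(\gamma(t),f\gamma(t))$ must project to geodesics in each factor. Hence $f$ is an affine diffeomorphism of $\widetilde M$, so $f^*\nabla=\nabla$. The metric $f^*g$ is then parallel. By irreducibility of the holonomy (de Rham, together with no Euclidean factor and topological irreducibility), every parallel symmetric 2-tensor is a constant multiple of $g$ when the holonomy is irreducible. This gives $f^*g=c^2g$, and $c=1$ since $\widetilde M$ is cocompact and non-flat (a homothety with $c\neq1$ of a cocompact Hadamard space that is not flat is impossible, because curvature bounds would scale). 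So $f$ is an isometry of $\widetilde M$. I expect the subtle part here to be the case of reducible holonomy on $\widetilde M$ without a finite-cover splitting of $M$. I would handle it using Eberlein's result: either $\widetilde M$ is symmetric, and we are done, or $\Gamma$ virtually preserves the de Rham splitting, which contradicts topological irreducibility.

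\textbf{Step 3: density gives a large isometry group.} Density of $S$ in $M\times M$ means that the set $\{(\gamma_1,\gamma_2)\cdot\widetilde S\}=\{\mathrm{graph}(\gamma_2 f\gamma_1^{-1})\}$ accumulates on every point of $\widetilde M\times\widetilde M$. Fix $x$. The elements $\gamma_2 f\gamma_1^{-1}\in \mathrm{Isom}(\widetilde M)$ then send points near $x$ to points near any prescribed $y$, with infinitely many of them distinct. Consequently the closure $G$ of $\langle \Gamma, f\Gamma f^{-1}\rangle$ in $\mathrm{Isom}(\widetilde M)$ is not discrete. A cleaner way to say this: if $f\Gamma f^{-1}$ were commensurable with $\Gamma$ and generated a discrete group together with $\Gamma$, the orbit $\Gamma\times\Gamma\cdot \widetilde S$ would be locally finite and $S$ would be closed, not dense. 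So $\mathrm{Isom}(\widetilde M)$ is non-discrete. By Eberlein's theorem (a cocompact, irreducible Hadamard manifold without Euclidean factor has discrete isometry group unless it is symmetric, equivalently Farb–Weinberger-type rigidity), $\widetilde M$ is symmetric, so $M$ is locally symmetric.

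The main obstacle is Step 1: establishing transversality and the graph structure from density alone, since the hypothesis, unlike in Theorem \ref{main-theorem}, does not assume transversality.
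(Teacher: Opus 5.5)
\textbf{The gap is Step 1, and it cannot be closed.} Without a transversality hypothesis the statement is false, so density alone can never give you the graph structure. Take $n=2$ and let $M$ be a closed surface of genus at least $2$ with a negatively curved metric of non-constant curvature. It is topologically irreducible, has no Euclidean factor, and is not locally symmetric. Its geodesic flow is topologically transitive, so there is a unit-speed geodesic $\gamma$ whose image is dense in $M$. Consider the immersion $(s,t)\mapsto(\gamma(s),\gamma(t))$ of the flat plane $\R^2$ into $M\times M$. It is complete and totally geodesic, because straight lines go to products of constant-speed geodesics, which are geodesics of the product. Its image $\gamma(\R)\times\gamma(\R)$ is dense in $M\times M$.

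Your parallel-distribution observation is actually correct. The space $T\widetilde S\cap(T\widetilde M\oplus 0)$ is preserved by parallel transport along $\widetilde S$, so it has constant rank and $\widetilde S$ splits off its leaves. But that is a de Rham splitting of $\widetilde S$, not of $\widetilde M$, so topological irreducibility of $M$ yields no contradiction. In the example, $\widetilde S=\R\times\R$ splits in exactly this way for both projections. Your first sketch fails for the same reason: the lines $\gamma\times\{p\}$ in $\widetilde S$ are not ruled out by $M$ having no Euclidean factor.

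The intended statement carries the semi-diagonal hypothesis, as in item (3) of the Corollary in the introduction. The paper's proof uses it tacitly when it invokes the proof of Theorem \ref{main-theorem} to make $F$ an isometry, since Lemma \ref{stretch_lemma} needs transversality. Add that hypothesis, and Step 1 reduces to Lemma \ref{stretch_lemma}.

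With it, the rest of your outline is sound and follows the paper's route. You show the lift is the graph of an isometry $f$, density forces non-discreteness of the isometry group, and then Farb--Weinberger or Eberlein applies. Your Step 2 is tidier than the paper's Steps I--III. An affine diffeomorphism pulls $g$ back to a parallel symmetric tensor, which is a constant multiple of $g$ when the holonomy is irreducible. Then $c=1$ follows from the scaling of the compact set of sectional curvature values. This needs neither Berger's classification nor separate treatment of symmetric $S$.

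Your Step 3 is also more precise than the paper's volume remark. If $\mathrm{Isom}(\widetilde M)$ were discrete, the graphs of the elements of $\Gamma f\Gamma$ would form a locally finite family. The image of $S$ would then be closed and dense, hence all of $M\times M$. That is impossible for the smooth image of an $n$-manifold in a $2n$-manifold.
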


If one only cares about negatively curved manifolds, then moving from non-positive curvature to negative curvature makes a slightly cleaner statement, since negatively curved, closed manifolds are topologically irreducible and cannot contain torus factors.
\begin{corollary} 
Let $M = \widetilde M / \Gamma$ be a negatively curved, closed and connected $n$-manifold. Then $M$ is locally symmetric in all of the following cases:
    \begin{enumerate} 
        \item If there are infinitely many closed, totally geodesic immersed $n$-dimensional submanifolds in the product $M\times M$ that are transverse to the projection maps. 

        \item If there exists an infinite number of $\Gamma$-distinct totally geodesic $n$-dimensional submanifolds of $\widetilde M \times \widetilde M$ which are transverse to the projection maps in $\widetilde M \times \widetilde M$. 

        \item If there exists a dense, totally geodesic $n$-dimensional submanifold $S\subset M \times M$ which is transverse to the projection maps.
    \end{enumerate}
    Furthermore, in case (1) $M$ is also an arithmetic manifold.
\end{corollary}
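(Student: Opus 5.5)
This corollary is a direct specialization of Theorems \ref{first-theorem}, \ref{main-theorem} and \ref{dense-theorem}. The plan is to verify that a negatively curved closed manifold satisfies their standing hypotheses, and then apply each theorem in turn.

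\textbf{Checking the hypotheses.} Suppose $M$ is closed with sectional curvature $K<0$. Compactness gives $K\le -\kappa<0$. Then $\widetilde M$ is Gromov hyperbolic and $\Gamma$ is a hyperbolic group.

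First, $M$ has no local Euclidean factor. A local de Rham splitting with a flat factor would produce flat $2$-planes, or, if the flat factor is $1$-dimensional, mixed planes spanned by the flat direction and a direction in another factor; such planes have zero curvature. If there is no other factor, then $M$ is itself flat. In every case this contradicts $K<0$, provided $n\ge 2$.

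Second, $M$ is topologically irreducible. If a finite cover were a product $N_1\times N_2$ with both factors positive-dimensional, then $\pi_1$ of that cover would be $\pi_1(N_1)\times\pi_1(N_2)$. Both factors are infinite, since the $N_i$ are closed aspherical manifolds of positive dimension. But a hyperbolic group cannot contain $\mathbb Z^2$, and it cannot be a direct product of two infinite groups. One way to see the latter: pick infinite-order elements $a\in\pi_1(N_1)$, $b\in\pi_1(N_2)$, which exist because these groups are torsion-free and infinite; then $\langle a,b\rangle\cong\mathbb Z^2$. Alternatively, apply Preissman's theorem directly: the centralizer of $a$ would contain $\pi_1(N_2)$, yet centralizers in $\pi_1$ of a negatively curved closed manifold are cyclic.

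The case $n=1$ is vacuous, since a closed $1$-manifold carries no negative sectional curvature condition in any meaningful sense. It can be set aside, or handled as trivially locally symmetric.

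\textbf{Applying the theorems.} With the hypotheses in place, each case follows from one result:
\begin{enumerate}
\item Case (1) is exactly Theorem \ref{first-theorem}, which also yields the arithmeticity conclusion.
\item Case (2) is Theorem \ref{main-theorem}.
\item Case (3) follows from Theorem \ref{dense-theorem}. The transversality assumption stated in (3) is extra, and only strengthens the hypothesis.
\end{enumerate}

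The only step requiring any argument is topological irreducibility, and Preissman's theorem settles it.
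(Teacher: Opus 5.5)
Your proposal is correct and takes the same approach as the paper. The paper also justifies the corollary in one line: it notes that closed negatively curved manifolds are topologically irreducible and have no Euclidean factors, then invokes Theorems \ref{first-theorem}, \ref{main-theorem} and \ref{dense-theorem}. You supply the standard arguments the paper leaves implicit: zero-curvature mixed planes rule out a flat factor, and Preissman's theorem (or the absence of $\mathbb{Z}^2$ in hyperbolic groups) gives irreducibility, where each $N_i$ is aspherical because $\pi_k(N_1\times N_2)=\pi_k(N_1)\times\pi_k(N_2)$.
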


A more general version of the main theorem, which drops the hypotheses of topological irreducibility and allows for local Euclidean factors, is proven and discussed in Section \ref{general_case_section}.\\

\subsection{An Idea of the Proof}

    While Theorems \ref{first-theorem}, \ref{main-theorem}, and \ref{dense-theorem} have a similar flavor to the prior work mentioned in the introduction (Theorem \ref{bfms} and Theorem \ref{ffl}), there are two big differences between the theorems presented in this paper and previous work. First, the theorems in this paper apply to manifolds of non-positive curvature, whereas prior work focuses on strictly negative curvature. Secondly, the prior theorems make heavy use of dynamical techniques in their proofs. In Theorem \ref{ffl}, the proof looks at the leaves of the Anosov foliation generated by the geodesic flow on $M$, and in Theorem \ref{bfms} one studies invariant homogeneous measures on $\text{SO}_0(n,1)$. However, the proofs presented here use more geometric techniques and rely on studying the Levi-Civita connection and holonomy groups of the immersions $\widetilde S$ in $\widetilde M \times \widetilde M$.\\

The general idea for the proof of Theorem \ref{main-theorem} is as follows: for any given $\widetilde S \subset \widetilde M \times \widetilde M$, we study the composition of the inclusion map with a projection onto a factor of $\widetilde M$, which we will call $F$. We first show that $F$ is a diffeomorphism. This map preserves geodesics, and thus the holonomy group. We can then use Berger's classification of holonomy groups to determine that in fact $F$ is an isometry. We conclude that $\Gamma$ must have infinite index in the group of isometries $\text{Isom}(\widetilde M)$.\\

In order to upgrade the fact that $[\text{Isom}(\widetilde M) : \Gamma] = \infty$ to local symmetry, we require the following theorem of Farb and Weinberger, which creates a dichotomy within the category of aspherical manifolds: either the universal cover of an aspherical manifold $M$ has discrete isometry group, or $M$ may be characterized as an orbibundle.

\begin{theorem}[Farb-Weinberger, Theorem 1.2 \cite{farb_isometries_2008}]
    Let $M$ be a closed, aspherical manifold. Then either $\text{Isom}(\widetilde M)$ is discrete, or $M$ is isometric to an orbibundle
    $$F \rightarrow M \rightarrow B$$
    where
    \begin{itemize}
        \item $B$ is a good Riemannian orbifold, and $\text{Isom}(\widetilde M)$ is discrete.
        \item Each fiber $F$, with the induced metric, is a closed, aspherical, locally homogeneous Riemannian $n$-manifold, with $n > 0$. 
    \end{itemize}
\end{theorem}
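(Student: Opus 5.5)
The plan is to study the Lie group $G=\text{Isom}(\widetilde M)$ directly. By Myers--Steenrod, $G$ is a Lie group acting properly on $\widetilde M$, and $\Gamma<G$ is a discrete cocompact subgroup. If $G$ is not discrete, its identity component $G_0$ is a nontrivial connected Lie group. Since $G_0$ is characteristic in $G$, it is normalized by $\Gamma$.

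First I would analyze the $G_0$-orbits on $\widetilde M$. Properness gives closed orbits $G_0x\cong G_0/K_x$ with $K_x$ compact. The slice theorem then describes $\widetilde M$ near each orbit as a twisted product $G_0\times_{K_x}V_x$. I would take a maximal compact subgroup $K<G_0$, use that $G_0/K$ is contractible (Cartan--Iwasawa--Malcev), and use that $\widetilde M$ is contractible and finite dimensional. A fixed point argument for compact groups acting on contractible manifolds (Borel/Smith-type, or convexity when the metric has nonpositive curvature) would then show that the orbits are all of one type. Concretely, the goal is that all isotropy groups $K_x$ are conjugate, so that $\widetilde M\to \widetilde M/G_0=:\widetilde B$ is a genuine fiber bundle with fiber $G_0/K$. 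Since the fiber is contractible, $\widetilde B$ is also contractible.

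Next I would descend to $M$. Because $\Gamma$ normalizes $G_0$, it permutes the fibers and acts on $\widetilde B$. To see that this action is proper and cocompact, I would use that $\Gamma G_0$ is closed in $G$: $G/G_0$ is discrete, so $\Gamma G_0$ is a union of components. Then $\Gamma$ acts on $\widetilde B$ through the discrete group $\Gamma/(\Gamma\cap G_0)$, which gives a good orbifold $B=\widetilde B/\Gamma$. The fiber of $M\to B$ over a point is $G_0x/\Gamma_{G_0x}$, where $\Gamma_{G_0x}$ is the stabilizer of the orbit. Since $G_0$ acts transitively on this fiber by isometries, the induced metric is locally homogeneous. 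Cocompactness of $\Gamma$ in $G$ forces $\Gamma_{G_0x}$ to be cocompact on $G_0/K$, so the fiber is closed. Its universal cover is $G_0/K$, which is contractible, so the fiber is aspherical. It has positive dimension since $G_0$ is nontrivial and acts with compact stabilizers. Finally, any identity component of $\text{Isom}(\widetilde B)$ would lift, using the connection defined by the orthogonal complements to the orbits, to a connected isometry group of $\widetilde M$ strictly larger than $G_0$. This contradicts maximality, so $\text{Isom}(\widetilde B)$ is discrete.

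I expect the main obstacle to be the uniformity of orbit types. Without it, the orbit space $\widetilde B$ can be singular and the fibers need not all be copies of one homogeneous space. The first thing I would try is to show that every compact subgroup of $G_0$ has a fixed point in $\widetilde M$. On contractible manifolds I would argue this via Smith theory for the maximal torus and finite subgroups; under nonpositive curvature it follows from Cartan's fixed point theorem. From this, every isotropy group is conjugate into the common maximal compact $K$. A dimension count using cocompactness of $\Gamma$ would then give principal orbits everywhere. A secondary difficulty is the discreteness claim for $\text{Isom}(\widetilde B)$, which requires care in lifting base isometries horizontally.
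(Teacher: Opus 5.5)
\paragraph{Main gap: uniformity of orbit types.} The paper gives no proof of this statement. It quotes it from Farb--Weinberger and uses it as a black box, so your proposal has to stand on its own. It has a genuine gap exactly where you anticipate one.

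Your fixed-point route is neither available nor sufficient.
\begin{itemize}
\item \emph{Not available.} Smith theory gives fixed points only for $p$-groups and tori. Compact connected non-abelian Lie groups, and finite groups not of prime-power order, admit smooth fixed-point-free actions on Euclidean spaces. Without a curvature hypothesis, Cartan's theorem is also unavailable.
\item \emph{Not sufficient.} That every isotropy group is conjugate into $K$ is automatic from Cartan--Iwasawa--Malcev. A $K$-fixed orbit also does not prevent smaller isotropy elsewhere. For example, let $G_0=\text{Isom}(\mathbb{H}^n)_0$ act on $T\mathbb{H}^n\cong G_0\times_{\text{SO}(n)}\R^n$: the zero section has isotropy $\text{SO}(n)$, and every other orbit has isotropy $\text{SO}(n-1)$.
\end{itemize}

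So all the content lies in the unexplained ``dimension count''. Here is what it has to prove. Since $G_0$ is connected, $K$ is connected. It therefore suffices to show that the principal isotropy group $H$ satisfies $\dim H=\dim K$. Then every isotropy group contains a conjugate of $H$ and lies in a conjugate of $K$, so it equals that conjugate of $K$. Without connectedness of $K$, a pure dimension count would still allow exceptional orbits.

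Proving $\dim H=\dim K$ is where asphericity and cocompactness must genuinely enter. One way:
\begin{itemize}
\item $\Lambda=\Gamma\cap G_0$ is a torsion-free cocompact lattice, as you correctly show. Hence it is a Poincar\'e duality group of dimension $d=\dim G_0/K$.
\item The Lyndon--Hochschild--Serre spectral sequence for $1\to\Lambda\to\Gamma\to\Gamma/\Lambda\to1$ with $\mathbb{Z}\Gamma$-coefficients, together with $H^n(\Gamma;\mathbb{Z}\Gamma)\cong\mathbb{Z}$, gives $H^{n-d}(\Gamma/\Lambda;\mathbb{Z}[\Gamma/\Lambda])\neq0$.
\item $\Gamma/\Lambda$ acts properly and cocompactly on $\widetilde M/G_0$. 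This space is acyclic (by Abels' global slice theorem plus Oliver's theorem) and has dimension $n-\dim G_0/H$. Cohomology with group-ring coefficients vanishes above that dimension, which forces $\dim G_0/H\le d$.
\end{itemize}
Nothing of this kind appears in your proposal.

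\paragraph{The discreteness of $\text{Isom}(\widetilde B)$.} Your final step is also wrong. A horizontal lift of an isometry of $\widetilde B$ is an isometry of $\widetilde M$ only if it also carries fiber metrics to fiber metrics. So nothing contradicts the maximality of $G_0$.

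With the quotient metric on $B$, the conclusion is in fact false. On $T^2$ take the metric $f(y)^2dx^2+dy^2$, with $f$ periodic and $f''/f$ non-constant.
\begin{itemize}
\item $\text{Isom}(\widetilde M)$ contains all $x$-translations, so it is not discrete.
\item The curvature $-f''/f$ is non-constant, so $M$ is not locally homogeneous. Hence every orbibundle as in the statement has one-dimensional base.
\item Then $\widetilde B\cong\R$, whose isometry group is not discrete.
\end{itemize}
The bullet is printed with $\text{Isom}(\widetilde M)$, evidently a typo for $\text{Isom}(\widetilde B)$. Your argument cannot produce it; it needs extra structure on $B$ or must be dropped.

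A smaller point: properness of the $\Gamma/(\Gamma\cap G_0)$-action on $\widetilde B$ comes from properness of the $\text{Isom}(\widetilde M)$-action on $\widetilde M$, not merely from discreteness of $G/G_0$.
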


Upon placing more conditions on our manifold $M$, one can force the base orbifold to be merely a point and force the fiber $F$ to be locally symmetric. This gives rise to the following theorem:

\begin{theorem}[Farb-Weinberger, Theorem 1.3 \cite{farb_isometries_2008} ]\label{farb-wein}
    Let $M$ be a closed Riemannian $n$-manifold with $n > 1$. Then the following are equivalent:
    \begin{itemize}
        \item $M$ is aspherical, topologically (smoothly) irreducible, $\pi_1(M)$ has no nontrivial normal abelian subgroups, and $\text{Isom}(\widetilde M)$ is not discrete.
        \item $M$ is isometric to a topologically (smoothly) irreducible, locally symmetric Riemannian manifold of non-positive curvature.
    \end{itemize}
\end{theorem}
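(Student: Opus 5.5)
The plan is to prove the two implications separately, starting with the easier one: that a locally symmetric manifold satisfies the four conditions.

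\emph{Locally symmetric implies the four conditions.} Suppose $M$ is closed, locally symmetric, non-positively curved and topologically irreducible. Cartan--Hadamard makes $M$ aspherical, and irreducibility is assumed. Write $\widetilde M = X_{\text{euc}} \times G/K$ with $X_{\text{euc}}$ the Euclidean de Rham factor.
\begin{itemize}
\item If $X_{\text{euc}}$ were nontrivial, the Bieberbach/Eberlein splitting would give a finite cover of $M$ that is a product with a torus. This contradicts irreducibility (a flat $T^n$ with $n>1$ is itself a product). So $\widetilde M = G/K$, with $G$ semisimple and without compact factors.
\item $\text{Isom}(\widetilde M)$ contains the connected group $G$, so it is not discrete.
\item Let $A \trianglelefteq \Gamma$ be abelian. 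By Borel density, the Zariski closure of $A$ is normalized by the Zariski closure of $\Gamma$, which is $G$. That closure is therefore a normal abelian algebraic subgroup of a semisimple group without compact factors, hence finite and central. Since $\Gamma$ is torsion-free, $A$ is trivial.
\end{itemize}

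\emph{The four conditions imply locally symmetric.} By Myers--Steenrod, $I=\text{Isom}(\widetilde M)$ is a Lie group containing $\Gamma$ as a discrete cocompact subgroup. Non-discreteness makes the identity component $I_0$ nontrivial, and $\Gamma$ normalizes $I_0$. The strategy is to show that $I_0$ is semisimple without compact factors and acts transitively.
\begin{itemize}
\item \emph{Transitivity.} Apply the preceding Farb--Weinberger dichotomy (Theorem 1.2 of \cite{farb_isometries_2008}). The fibers of the orbibundle $F \to M \to B$ are the projections of $I_0$-orbits. Topological irreducibility together with the asphericity of $F$ should force $B$ to be a point. Otherwise one passes to a finite cover where $\Gamma$ virtually splits compatibly with the fibration and gets a product, using that $\pi_1(F)$ is normal in $\Gamma$ and the fiber is locally homogeneous. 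So $I_0$ acts transitively and $\widetilde M = I_0/K$.
\item \emph{No compact factors.} Let $I_0 = R \rtimes L$ be a Levi decomposition. Since $\widetilde M$ is contractible, $K$ is maximal compact. Compact normal factors of $L$ can be absorbed into $K$.
\item \emph{Trivial radical.} Let $N$ be the nilradical of $R$. If $\Gamma \cap N$ is a lattice in $N$, its center is a nontrivial abelian subgroup, and it is normal in $\Gamma$ because $N$ is characteristic and $Z(\Gamma\cap N)$ is characteristic in $\Gamma \cap N$. This contradicts the hypothesis. Hence $N$, and so $R$, is trivial, and $I_0$ is semisimple without compact factors.
\item \emph{The metric is symmetric.} Irreducibility of $M$, via Borel density again, makes $\Gamma$ irreducible in $I_0$, so the isotropy representation of $K$ on each simple factor is irreducible. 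A $G$-invariant metric on $G/K$ with $K$ acting irreducibly on each factor is, up to scaling on each factor, the symmetric metric. Therefore $M=\Gamma\backslash G/K$ is locally symmetric of non-positive curvature.
\end{itemize}

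\emph{Main obstacle.} The hard step is showing that $\Gamma\cap N$ (and similarly $\Gamma\cap R$) is a lattice. A discrete cocompact subgroup of $I_0 \rtimes \Gamma$ need not meet the radical in a lattice a priori. To handle this, I would first pass to the Zariski closure and apply Auslander's theorem on projections of lattices to Levi factors. This reduces to the case where $\Gamma$ projects discretely to $I_0/R$, which gives the lattice property for $\Gamma\cap R$ and hence, by Mostow, for $\Gamma\cap N$. Ruling out the base orbifold $B$ is the second delicate point. There I would argue that a nontrivial $B$ produces the normal subgroup $\pi_1(F)\trianglelefteq\Gamma$, and with it a virtual product decomposition.
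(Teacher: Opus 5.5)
The paper does not prove this statement. It is quoted from Farb--Weinberger and used as a black box in Step IV of the main proof, so your proposal has to stand on its own. Your direction from locally symmetric to the four conditions is fine: Cartan--Hadamard, Eberlein's virtual splitting of a Euclidean factor, and Borel density for normal abelian subgroups.

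The converse has a genuine gap at the step you call \emph{Transitivity}. This is the only place topological irreducibility enters, and it is the real content of the theorem.

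\emph{The mechanism you offer is false as stated.} You argue that a nontrivial base $B$ gives $\pi_1(F)\trianglelefteq\Gamma$ with locally homogeneous aspherical fiber, hence a virtual product. A closed hyperbolic $3$-manifold fibering over the circle has exactly this structure, with a hyperbolic surface as fiber, yet no finite cover of it is a product.

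\emph{What actually forces a splitting is the structure of $I_0$.} Suppose $I_0$ is semisimple with trivial center and no compact factors. Then $\text{Out}(I_0)$ is finite, so a finite-index $\Gamma'$ lies in $I_0\cdot Z_I(I_0)$. Its projection to $I_0$ normalizes the lattice $\Gamma'\cap I_0$, and is therefore discrete by Borel density. This yields a finite-index subgroup of the form $\Lambda'\times\Delta$ with $\Lambda'\subset I_0$ and $\Delta\subset Z_I(I_0)$.

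\emph{Even a group splitting is not enough.} It only gives a homotopy equivalence of a finite cover with a product. To contradict topological irreducibility you must build an actual diffeomorphism of a finite cover with a product manifold out of the $I_0\times Z_I(I_0)$-action, and that needs genuine geometric input about the $I_0$-orbits. None of this appears in your sketch.

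This also shows your steps are in a circular order. \emph{No compact factors} uses $\widetilde M=I_0/K$, i.e.\ transitivity. The repaired transitivity argument needs semisimplicity and the absence of compact normal subgroups of $I_0$ beforehand. So you need an independent argument excluding compact normal subgroups.

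By contrast, the lattice issue you flag as the main obstacle is the easy part. Since $I_0$ is open in $I$, the set $I_0\Gamma/\Gamma\cong I_0/(\Gamma\cap I_0)$ is clopen in the compact space $I/\Gamma$. Hence $\Gamma\cap I_0$ is automatically a cocompact lattice in the connected group $I_0$, and no Zariski closures are needed. Auslander--Wang then applies to it directly once no compact factor of a Levi subgroup centralizes the radical. Such factors are again compact normal subgroups of $I_0$, which you only exclude via transitivity.

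Finally, in the last step the irreducibility of $\Gamma$ plays no role. For any semisimple $G$ without compact factors:
\begin{itemize}
\item $K$ acts irreducibly on the tangent summand attached to each simple factor of $G$;
\item these summands are pairwise inequivalent $K$-representations;
\item an irreducible real representation of a compact group has a unique invariant inner product up to scale.
\end{itemize}
So your conclusion there is true, but the reason you give does not establish it.
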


We see then, after putting some restrictions on our Riemannian manifold, that if the universal cover admits a nontrivial convergent sequence of isometries then our universal cover must have been a symmetric space; that is, from a small amount of symmetry, we can infer that our manifold has a large amount of symmetry.\\

\noindent\textbf{Acknowledgments} I would like to thank my advisor Wouter van Limbeek, whose discussions and remarks were invaluable. I would also like to thank Benson Farb for his helpful edits for a cleaner exposition, and Ben Lowe for his comments and supplying a helpful example.

%
%
%
%
\section{Definitions and Preliminaries}

First, a quick rundown of some basic definitions.\\ 

We must first disambiguate two notions of irreducibility that we will encounter in this paper. First, we may speak of a Riemannian manifold $(M, g)$ being \emph{metrically irreducible} in the sense that it cannot be written as a product of two other Riemannian manifolds. That is, we cannot find $(N_1, g_1)$ and $(N_2, g_2)$ so that $M = N_1 \times N_2$ and $g = g_1 \oplus g_2$. The other notion of irreducibility we must reckon with is the one that appears in Theorem 1.2. We will say a manifold $M$ is $\emph{topologically (smoothly) irreducible}$ if there does not exist a non-trivial product manifold $N_1 \times N_2$ which finitely (and smoothly) covers $M$ (that is, we can take arbitrary finite covers without any of them being a product). Importantly, if a Riemannian manifold is topologically reducible, that product need not respect the metric structure.\\

Recall that a homothety $f$ between Riemannian manifolds $(M, g_M)$ and $(N, g_N)$ is a map $f: M \rightarrow N$ so that $f^*g_N = C \cdot g_M$ for some constant $C> 0$.\\

We will also recall some equivalent definitions of symmetric space:

\begin{definition}\label{symmetric_def} Let $X$ be a contractible Riemannian manifold. The following are equivalent:
\begin{itemize}
    \item $X$ is a symmetric space.
    \item $X$ is homogeneous (i.e. $\text{Isom}(X)$ acts transitively on $X$) and there is an isometric involution on $X$ which has an isolated fixed point.
    \item For any point $p\in X$, there is an isometric involution which has $p$ as an isolated fixed point.
\end{itemize}
\end{definition}

One can then say that $M$ is a locally symmetric space if $\widetilde M$ is a symmetric space.\\

To use the machinery from \cite{farb_isometries_2008}, it is also necessary to discuss warped products.
\begin{definition}
    If $F, B$ are manifolds, let $\mathcal{M}(F)$ be the moduli space of locally homogeneous metrics on $F$, and let $\lambda : B \rightarrow \mathcal{M}(F)$. Then the warped product $F \times_\lambda B$ is the manifold $F \times B$ with the metric at $T_{(x,b)}(F \times B)$ given by $g_{(x,b)}(v,w) = \lambda(b)(v,w) + g_B(v,w)$. \\

    For example, if $F$ is locally symmetric and $f : B \rightarrow \R_{>0}$, we may define the warped product $F \times_fB$ to have the metric $g_{x,b}^{F\times_F B} = f(b) \cdot g_x^F + g_b^B$.
\end{definition}

Now, we wish to study totally geodesic manifolds sitting inside a product $M \times M$, with the hope that having infinitely many such manifolds forces $M$ to have extra structure. However, we must be more discerning; given any closed $n$-manifold $M$, the product $M\times M$ will admit infinitely many totally geodesic $n$-submanifolds for the simple fact that $M\times \{q\}$ is a totally geodesic submanifold of $M\times M$. However, we cannot just disallow factors. We could have a non-locally symmetric $M$ with a totally geodesic hypersurface $N$; then if $\gamma$ is a closed geodesic, $S \times \gamma$ is a closed, totally geodesic submanifold in $M\times M$, and there exist infinitely many such submanifolds. To prevent these pathologies, we will force all our submanifolds to be ``semi-diagonal"; that is, we wish any submanifold $S$ we consider to be transverse to the factors $M\times \{q\}$ and $\{p\} \times M$. This leads us to the following definition, which generalizes the notion of the diagonal submanifold of $M\times M$:
\begin{definition}\label{semidiagonal}
    Let $S$ be an immersed, complete $n$-submanifold of $M\times M$. We will say $S$ is semi-diagonal if for $(p,q)\in S$, $S$ is transverse to the submanifolds $M \times \{q\}$ and $\{p\} \times M$. We may denote such a submanifold $(S, g_S, \iota_S)$ where $\iota_S$ is the immersion.
\end{definition}

In this paper, we will be concerning ourselves with semi-diagonal submanifolds that are also totally geodesic. Now, the obvious example of a family of semi-diagonal submanifolds of $M\times M$ are graphs of diffeomorphisms; in the case that we wish these semi-diagonal submanifolds to be totally geodesic, then the obvious example becomes the graph of an isometry (or, more generally, maps that preserve geodesics). If $M$ is negatively curved, we will see that it is actually the case that all such submanifolds arise from an isometry, up to taking a (possibly infinite) cover of $M \times M$.\\

For the main theorem, we require that there be infinitely many totally geodesic semi-diagonal submanifolds in the universal cover $\widetilde M \times \widetilde M$; however, given one such submanifold $\widetilde S$ which descends to a submanifold $S \subset M\times M$, one can act on $\widetilde S$ by the natural action of $\pi_1(M) \times \pi_1(M)$ to obtain distinct semi-diagonal totally geodesic submanifolds of $\widetilde M \times \widetilde M$, but yet they descend to the same submanifold $S$. Thus we must require some sense of distinctness in $\widetilde M \times \widetilde M$. This leads us to the following definition:

\begin{definition}\label{gamma-distinct}
    Let $F,G : S \rightarrow \widetilde M \times \widetilde M$ be immersions. We will say $F$ and $G$ are $\Gamma$-distinct if for any $\gamma_1, \gamma_2 \in \Gamma$ we have that $\gamma_2 \cdot \text{Im}(G\circ \gamma_1) \ne \text{Im}(F)$.
\end{definition}

Lastly, we will require that our manifold $M$ contains no local Euclidean factors. By the Flat Torus Theorem, this implies that the fundamental group contains no non-trivial normal abelian subgroups, thus satisfying the condition on the fundamental group of $M$ in Theorem \ref{farb-wein}. A treatment of the Flat Torus Theorem can be found in Chapter 7 of \cite{bridson_metric_2011}.

%
%
%
%
%
%
%
%

\section{Proof of the Main Theorem}

Now, our main theorem will be the following:
\begin{theorem}
    Let $M = \widetilde M / \Gamma$ be a non-positively curved, closed and connected Riemannian manifold which is topologically irreducible and which has no local Euclidean factors, and assume there exists an infinite number of $\Gamma$-distinct totally geodesic semi-diagonal $n$-submanifolds in $\widetilde M \times \widetilde M$. Then $M$ is locally symmetric.
\end{theorem}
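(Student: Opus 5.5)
The plan follows the outline given in the introduction: turn each submanifold into an isometry of $\widetilde M$, and use the infinitely many $\Gamma$-distinct ones to show that $\text{Isom}(\widetilde M)$ is not discrete. Theorem \ref{farb-wein} then gives local symmetry.

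\textbf{Step 1: each submanifold is a graph.} Fix a totally geodesic semi-diagonal $n$-submanifold $\widetilde S \subset \widetilde M\times\widetilde M$, with projections $F_i = \text{pr}_i\circ\iota_S$. Semi-diagonality says $dF_1$ and $dF_2$ are injective, so both are local diffeomorphisms. Since $\widetilde S$ is totally geodesic, its geodesics are geodesics of the product, of the form $t\mapsto(\gamma_1(at),\gamma_2(bt))$ with $a,b\neq 0$. Two consequences follow.
\begin{itemize}
\item $F_i$ sends geodesics to reparametrised geodesics at constant speed, so it is affine: it carries the Levi-Civita connection of $\widetilde S$ to that of $\widetilde M$.
\item Completeness of $\widetilde S$ lifts geodesics, so $F_i$ is a covering map. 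As $\widetilde M$ is simply connected (Cartan–Hadamard), $F_i$ is a diffeomorphism.
\end{itemize}
Hence $\widetilde S$ is the graph of the diffeomorphism $\phi = F_2\circ F_1^{-1}$, which is an affine map of $\widetilde M$.

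\textbf{Step 2: $\phi$ is an isometry.} An affine map preserves the holonomy decomposition. Write the de Rham decomposition $\widetilde M = \prod_j M_j$; by hypothesis there is no flat factor. Then $\phi$ permutes the factors and pulls each factor metric back to a parallel metric on the corresponding factor.
\begin{itemize}
\item On a non-symmetric factor, Berger's classification says the holonomy is irreducible, so the parallel symmetric forms are multiples of the metric. Thus $\phi$ restricts to a homothety there.
\item On a symmetric factor the same Schur-type argument applies, since the isotropy representation is irreducible.
\end{itemize}
A homothety between nonflat complete factors with cocompact symmetry must have ratio $1$. To see this, compare the curvature bounds coming from compactness of $M$: if the ratio were $c\neq 1$, iterating the argument gives arbitrarily large or small curvature on a factor, contradicting the fact that curvature is bounded and not identically zero. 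Consequently $\phi\in\text{Isom}(\widetilde M)$. Because $\phi$ depends only on the image of $\widetilde S$, distinct images give distinct isometries.

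\textbf{Step 3: non-discreteness.} The graph of $\phi$ and the graph of $\gamma_2\phi\gamma_1$ are related by the $\Gamma\times\Gamma$-action. So $\Gamma$-distinct submanifolds give isometries $\phi_k$ that are pairwise distinct in $\Gamma\backslash\text{Isom}(\widetilde M)/\Gamma$. Since $\Gamma$ is cocompact, we can pick $\gamma_k\in\Gamma$ so that every $\gamma_k\phi_k$ moves a fixed basepoint into a fixed compact fundamental domain. Arzelà–Ascoli then gives a convergent subsequence in $\text{Isom}(\widetilde M)$.

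If $\text{Isom}(\widetilde M)$ were discrete, these elements would eventually be constant. So infinitely many $\phi_k$ would lie in a single double coset $\Gamma g\Gamma$, contradicting $\Gamma$-distinctness. Hence $\text{Isom}(\widetilde M)$ is not discrete.

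\textbf{Step 4: conclusion.} The manifold $M$ is aspherical by Cartan–Hadamard and topologically irreducible by hypothesis. Since there is no local Euclidean factor, the Flat Torus Theorem shows $\pi_1(M)$ has no nontrivial normal abelian subgroups. Theorem \ref{farb-wein} then gives that $M$ is locally symmetric.

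\textbf{Main obstacle.} I expect the hardest part to be Step 2, the passage from affine map to isometry. Two points need care there:
\begin{itemize}
\item ruling out a homothety with ratio $\neq 1$ on a factor;
\item handling a $\phi$ that permutes isometric de Rham factors.
\end{itemize}
I would treat both via Berger/de Rham together with the cocompactness bounds on curvature.
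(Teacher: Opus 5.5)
Step 1 and Step 4 are sound. Working with $\phi=F_2\circ F_1^{-1}$ and a Schur argument on parallel symmetric $2$-tensors is a cleaner route than the paper's. The paper uses $F=\text{pr}_1\circ\iota$, Berger's transitivity theorem, and a separate proposition for symmetric $S$. Note also that $F$ itself, with the induced metric on $S$, is a homothety of ratio $a<1$, because a unit-speed geodesic of $S$ projects with speeds $a,b$ satisfying $a^2+b^2=1$; so $\phi$ is the right object to prove is an isometry.

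However, Step 2 contains a false claim. Your iteration argument only controls a homothety of a factor \emph{to itself}. When $\phi$ induces a nontrivial permutation $\sigma$ of the de Rham factors, iterating only forces the product of the ratios around each cycle of $\sigma$ to equal $1$. A homothety between two \emph{different} nonflat factors need not have ratio $1$.

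Concretely, let $\Gamma$ be a torsion-free cocompact irreducible arithmetic lattice in $\text{PSL}_2(\R)\times\text{PSL}_2(\R)$. Let it act on $\widetilde M=\mathbb{H}^2\times\mathbb{H}^2$ with the metric $g\oplus c^2g$, where $g$ is hyperbolic and $c\neq 1$. Then $M$ satisfies every hypothesis of the theorem; arithmeticity supplies infinitely many $\Gamma$-distinct graphs. The swap $\tau(x,y)=(y,x)$ is affine, since rescaling a factor does not change its Levi-Civita connection, so its graph is a totally geodesic semi-diagonal submanifold. But $\tau^*(g\oplus c^2g)=c^2g\oplus g$, so $\tau\notin\text{Isom}(\widetilde M)$. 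Hence ``consequently $\phi\in\text{Isom}(\widetilde M)$'' is false, and Step 3 as written has no sequence of isometries to feed into Arzel\`a--Ascoli. The problematic case is factors that are homothetic but \emph{not} isometric, not the isometric factors you flagged. (The paper's Step V makes the same unjustified assertion.)

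The repair is short. Your iteration argument does show that an affine self-map of $\widetilde M$ inducing the trivial permutation is a self-homothety of each factor, and therefore an isometry. Applied to $h'^{-1}\circ h$, it also shows that any two homotheties $h,h'\colon M_j\to M_k$ have the same ratio. Consequently the $\phi_k$ are uniformly bi-Lipschitz, and $\psi\psi'^{-1}\in\text{Isom}(\widetilde M)$ whenever two affine maps $\psi,\psi'$ induce the same permutation.

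In Step 3, apply Arzel\`a--Ascoli to $\psi_k=\gamma_k\phi_k$, then pass to a further subsequence on which all $\psi_k$ induce the same permutation. The isometries $\psi_k\psi_1^{-1}$ then converge in $\text{Isom}(\widetilde M)$. If that group were discrete they would be eventually constant, giving $\gamma_k\phi_k=\gamma_l\phi_l$ for some $k\neq l$, which contradicts $\Gamma$-distinctness. With this modification your argument goes through.
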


But first, we will investigate how totally geodesic semi-diagonal manifolds arise as graphs of isometries. We first prove that they arise as diffeomorphisms up to a covering, and that they act nicely on geodesics.

\begin{lemma}\label{stretch_lemma}
Let $(S,g_S, \iota_S)$ is a totally geodesic semi-diagonal $n$-submanifold of $\widetilde M \times \widetilde M$, where $\widetilde M$ is non-positively curved. Then the map $F_i = \PR{i} \circ \iota : S \rightarrow \widetilde M$ is a diffeomorphism for $i=1,2$. Further, if $\gamma$ is a unit-speed geodesic in $S$, then $F_i(\gamma(t)) = \alpha(at)$, where $\alpha$ is a unit-speed geodesic in $\widetilde M$ and $a > 1$. 
\end{lemma}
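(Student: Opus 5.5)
The plan is to work entirely with the product structure of geodesics in $\widetilde M\times\widetilde M$ and with exponential maps.

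\textbf{Step 1: local diffeomorphism.} Since $\iota_S$ is an immersion of an $n$-manifold into the $2n$-manifold $\widetilde M\times\widetilde M$, and $\ker d(\PR{1})_{(p,q)}=T(\{p\}\times\widetilde M)$, we have
\[
\ker d(F_1)_x = d\iota_S(T_xS)\cap T_{(p,q)}(\{p\}\times \widetilde M).
\]
By semi-diagonality, $d\iota_S(T_xS)$ is transverse to $T(\{p\}\times\widetilde M)$. Both spaces have dimension $n$ inside a $2n$-dimensional space, so they are complementary and the intersection is $0$. Hence $dF_1$ is an isomorphism at every point, and the same argument applies to $F_2$. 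So each $F_i$ is a local diffeomorphism.

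\textbf{Step 2: behaviour on geodesics.} Because $S$ is totally geodesic, a unit-speed geodesic $\gamma$ of $S$ maps under $\iota_S$ to a geodesic of the product. Geodesics in a Riemannian product are exactly pairs of geodesics, each traversed at constant speed. Thus $\iota_S\gamma(t)=(\alpha_1(a_1t),\alpha_2(a_2t))$ with $\alpha_i$ unit-speed geodesics of $\widetilde M$ and constants $a_i\ge 0$ satisfying $a_1^2+a_2^2=1$.

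The speed $a_i$ is $|dF_i(\dot\gamma)|$, which is nonzero by Step 1. Since $a_1^2+a_2^2=1$ and both terms are positive, each $a_i$ is strictly different from $1$. So $F_i$ sends geodesics to reparametrized geodesics with a constant linear stretch factor.

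For the inequality $a>1$ as the lemma states it, I would read off the constant from the relation $F_i(\gamma(t))=\alpha(at)$ with $\alpha$ unit-speed in $\widetilde M$ and $\gamma$ unit-speed in $S$. I expect this to be the delicate point: one must pin down exactly which metric normalization on $S$ versus $\widetilde M$ the comparison refers to, so that the strict inequality comes out in the direction stated.

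\textbf{Step 3: global diffeomorphism.} Fix $x\in S$ with $F_i(x)=y$. By Step 2, $F_i$ maps the geodesic $t\mapsto\exp^S_x(tv)$ to the geodesic $t\mapsto \exp^{\widetilde M}_y(t\,dF_i(v))$. Therefore
\[
F_i\circ \exp^S_x=\exp^{\widetilde M}_y\circ dF_i|_{T_xS}.
\]
The right-hand side is a diffeomorphism $T_xS\to\widetilde M$. Indeed, $dF_i$ is a linear isomorphism by Step 1, and $\exp^{\widetilde M}_y$ is a diffeomorphism by Cartan--Hadamard, since $\widetilde M$ is simply connected and non-positively curved.

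It follows that $\exp^S_x$ is injective. It is also surjective, because $S$ is complete and connected (Hopf--Rinow). Composing with the inverse of the right-hand side shows it is a diffeomorphism. Consequently $F_i=(\exp^{\widetilde M}_y\circ dF_i)\circ(\exp^S_x)^{-1}$ is a diffeomorphism.

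This exponential-map argument avoids needing any uniform lower bound on the stretch factors. Apart from fixing the normalization behind $a>1$, I do not expect further obstacles.
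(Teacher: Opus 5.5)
Your Steps 1 and 3 are correct. Your Step 2 computation also settles the point you left open. The metric on $S$ must be the induced one, since that is the metric for which $S$ is totally geodesic and $\gamma$ is unit-speed. So $a_1^2+a_2^2=1$, transversality gives $a_i>0$, and hence $0<a_i<1$.

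No normalization rescues $a>1$. The diagonal $\{(x,x)\}$ is totally geodesic and semi-diagonal, and there $a_1=a_2=1/\sqrt2$. The inequality in the lemma is simply reversed. The paper's proof writes $\iota(\gamma(t))=(\alpha(at),\beta(bt))$ with ``$a,b>1$, strict due to transversality''. But transversality only yields $a,b>0$, and $a^2+b^2=1$ then forces $a,b<1$. A constant larger than $1$ appears only if you parametrize the other way, $\alpha(t)=F_i(\gamma(t/a))$, i.e.\ $F_i^{-1}$ stretches geodesics by $1/a$. You should state the corrected conclusion $0<a<1$ outright rather than treat it as an unresolved normalization issue.

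For the diffeomorphism claim, your route differs from the paper's.

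The paper proves injectivity by joining two points of $F_i^{-1}(x)$ by a geodesic of $S$. Its projection is a geodesic loop at $x$ in the Hadamard manifold $\widetilde M$, hence constant, which contradicts transversality. For surjectivity it says the image is open (local diffeomorphism) and closed ``since $\iota(S)$ is complete''. That closedness is only asserted. Completeness alone does not give it for a local diffeomorphism that merely shrinks lengths; compare $t\mapsto \tfrac12+\tfrac1\pi\arctan t$ from $\R$ onto $(0,1)$.

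Your identity $F_i\circ\exp^S_x=\Phi$, with $\Phi=\exp^{\widetilde M}_y\circ dF_i|_{T_xS}$, uses the same two inputs: Cartan--Hadamard on $\widetilde M$ and Hopf--Rinow on $S$. But it delivers injectivity, surjectivity and smoothness of the inverse at once. $\Phi^{-1}\circ F_i$ is a smooth left inverse of the surjective map $\exp^S_x$, hence its inverse, and then $F_i=\Phi\circ(\exp^S_x)^{-1}$. So your argument is cleaner and supplies the step the paper's surjectivity argument leaves implicit. Both arguments tacitly use that $S$ is connected.
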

\begin{proof}
    The fact that $\iota(S)$ is transverse to factors guarantees that $\iota$ is transverse to $\PR{i}$ as maps; so locally $\iota(S)$ is a graph in $\widetilde M \times \widetilde M$. Since $\iota$ is a local diffeomorphism from $S$ to $\iota(S)$, $F_i = \PR{i} \circ \iota$ is a local diffeomorphism.\\

    To prove injectivity of $F_i$, let $p,q \in \PR{i}^{-1}(x)$. Then there exists a geodesic $\gamma$ in $S$ between $p$ and $q$, which is also a geodesic in $\widetilde M \times \widetilde M$. Since $\PR{i}$ is a Riemannian submersion, $\PR{i}(\gamma)$ is a geodesic in $\widetilde M$ which intersects itself at the point $x$. Further, $\widetilde M$ is non-positively curved and simply connected; thus, the exponential map at $T_x\widetilde M$ is a diffeomorphism onto $\widetilde M$. So the only way for the geodesic $\PR{i}(\gamma)$ to intersect itself non-trivially at $x$ is for $\PR{i}(\gamma) = x$. So it must be that
    $\left(d_p\PR{i}\right)\gamma'(0) = 0$. But since $S$ is transverse to fibers, $\gamma'(0) = 0$ and $p=q$, so we can infer that $F$ is injective. Surjectivity of $F_i$ follows from the fact that $\iota\left(\widetilde S\right)\subset \widetilde M \times \widetilde M$ is complete, so that $\PR{i}\circ \iota\left(\widetilde S\right)$ is both open and closed in $M$.\\

    It is clear that $F$ preserves geodesics, since $\iota(\gamma(t))$ is a unit-speed geodesic in $\widetilde M \times \widetilde M$ and geodesics decompose in products, so that $\iota(\gamma(t)) = (\alpha(at), \beta(bt))$ with $\alpha,\beta$ unit-speed in $\widetilde M$ and $a,b > 1$ (where the inequality is strict due to transversality to factors).
\end{proof}

Next, we require a short proposition to take care of an edge case in the proof of the main theorem; however, the edge case is rather nice, it is the case in which our semi-diagonal submanifold is a symmetric space itself.\\

\begin{proposition}\label{symmetric_semi-diag}
    Let $(S, g_S, \iota)$ be a totally geodesic semi-diagonal $n$-submanifold of $\widetilde M \times \widetilde M$, where $M$ is non-positively curved. Then if $(S, g_S)$ is locally symmetric, $M$ is locally symmetric.
\end{proposition}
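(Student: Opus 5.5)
The plan is to show that $F_1 = \PR{1}\circ\iota : S \to \widetilde M$ is an \emph{affine} diffeomorphism, meaning it intertwines the Levi-Civita connection of $(S,g_S)$ with that of $\widetilde M$, even though it need not be an isometry. Local symmetry is the condition $\nabla R = 0$, which is expressed entirely through the connection, so it will then transfer from $S$ to $\widetilde M$.

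The first step is that $F_1$ is a diffeomorphism by Lemma~\ref{stretch_lemma}. The second step is to check that $F_1$ preserves connections. Because $S$ is totally geodesic, the second fundamental form vanishes, so for vector fields $X,Y$ on $S$ we have
$$d\iota\left(\nabla^S_X Y\right) = \nabla^{\widetilde M\times\widetilde M}_{X}\, d\iota(Y),$$
where the right-hand side is the pullback connection along $\iota$. The Levi-Civita connection of the Riemannian product is the product connection. Hence $\PR{1}$ is an affine map: for fields along any map into the product, the first component of the covariant derivative is the covariant derivative of the first component. Composing these two facts gives
$$dF_1\left(\nabla^S_X Y\right) = \nabla^{\widetilde M}_{dF_1 X}\, dF_1(Y),$$
where the right-hand side is the pullback connection along $F_1$. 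Since $F_1$ is a diffeomorphism, we can push vector fields forward, and this says exactly that $(F_1)_*\nabla^S = \nabla^{\widetilde M}$. This is consistent with the geodesic statement in Lemma~\ref{stretch_lemma}: geodesics go to affinely reparametrized geodesics.

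The third step is to transfer curvature. The curvature tensor $R$ and its covariant derivative $\nabla R$ are built only from the connection. So an affine diffeomorphism satisfies $F_1^*R^{\widetilde M} = R^S$ and $F_1^*(\nabla^{\widetilde M}R^{\widetilde M}) = \nabla^S R^S$, viewing both as $(1,3)$- and $(1,4)$-tensors so that no metric enters. Since $S$ is locally symmetric, $\nabla^S R^S = 0$, and therefore $\nabla R^{\widetilde M}=0$ everywhere on $\widetilde M$. A Riemannian manifold with parallel curvature tensor is locally symmetric. Moreover $\widetilde M$ is complete and simply connected, so it is a symmetric space, which means $M$ is locally symmetric.

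The main point requiring care is the second step. There one must correctly handle covariant derivatives along the immersion, since $\iota$ need not be injective, and along the projection. The cleanest way is to phrase everything with pullback connections and to use the standard fact that for the product metric the tangent bundle splits into two parallel distributions, with $\nabla$ acting componentwise. If one prefers to avoid this formalism, an alternative is geometric. Lemma~\ref{stretch_lemma} shows that $F_1$ maps geodesics to affinely parametrized geodesics. A map that preserves affinely parametrized geodesics preserves the torsion-free connection, because such a connection is determined by its geodesic spray. This gives the same conclusion.
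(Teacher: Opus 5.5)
Your proof is correct, and it takes a different route from the paper's.

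\textbf{The paper's route.} The paper conjugates the global geodesic symmetry $s_p$ of $S$ by $F=\PR{1}\circ\iota$. This gives an involution $t_q=F\circ s_p\circ F^{-1}$ of $\widetilde M$ fixing $q=F(p)$. The paper then tries to show directly that $t_q$ is an isometry, using a rescaling argument: $F$ stretches each geodesic by a constant factor. It concludes via the characterization of symmetric spaces by involutive isometries.

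\textbf{Your route.} You show that $F$ is an affine diffeomorphism, using the Gauss formula with vanishing second fundamental form and the splitting of the product connection. You then transport the metric-free condition $\nabla R=0$ and finish with Cartan's criterion and the completeness and simple connectivity of $\widetilde M$.

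\textbf{What your route buys.} It is rigorous exactly where the paper's proof is weakest. The stretch factor of $dF$ depends on the geodesic. The paper's claim that $t_q$ preserves norms ``by a similar argument'' implicitly needs $F$ to stretch $\gamma$ and $s_p\circ\gamma$ by the same factor. That is never established; it only holds a posteriori, once $t_q$ is known to be an isometry. Since you compare connections rather than lengths, this issue never arises. You also use only the local condition $\nabla^S R^S=0$, not a global involutive isometry $s_p$. The paper uses $s_p$ implicitly, which relies on $S\cong\widetilde M$ being complete and simply connected.

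\textbf{How your viewpoint repairs the paper's construction.} The map $t_q$ is an affine involution with $d_q t_q=-\mathrm{id}$. The tensor $\nabla R$ is a $(1,4)$-tensor preserved by $t_q$, and it changes sign under $-\mathrm{id}$, which forces $(\nabla R)_q=0$. Only after that does one learn that $t_q$ is the isometric geodesic symmetry of $\widetilde M$. That explicit description of the symmetries of $\widetilde M$ as conjugates of those of $S$ is what the paper's approach would add, once completed this way.
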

\begin{proof}
    Let $F = \PR{1} \circ \iota$. Then for an arbitrary point $p\in S$, let $q = F(p)$. We have an involutive isometry $s_p$ at $p$, and so we can obtain an involutive diffeomorphism $t_q = F^{-1} \circ s_p \circ F$ on $\widetilde M$. \\

    Now, choose $v \in T_x\widetilde M$ for some $x\in \widetilde M$. Then we can write $\exp_x v = \overline{\gamma}(\norm{v})$, for a unit-speed geodesic $\overline{\gamma}$. Since $F$ preserves geodesics by sending geodesics $\gamma(t)$ to geodesics $\overline{\gamma(ct)}$, we have $F^{-1}(x) = \text{exp}_{F^{-1}(x)}\left(c^{-1}\norm{v}\right)$ so that $d_xF(v) = c^{-1}v$. Then $d_{F^{-1}(x)}\circ d_xF(v) = c^{-1}v$ and $d_{s_p\circ F^{-1}(x)}F \circ d_{F^{-1}(x)}s_p\circ d_xF(v) = v$ by a similar argument. Thus, $t_p = F \circ s_p \circ F^{-1}$ preserves the norm induced by the Riemannian metric on the tangent bundle of $\widetilde M$; thus, it preserves the metric itself and is an isometry. Since such an involutive isometry exists at every point in $\widetilde M$, so $\widetilde M$ is a symmetric space. Thus, $M = \widetilde M / \Gamma$ is locally symmetric, finishing the proof.
\end{proof}

\begin{theorem}
    Let $M = \widetilde M / \Gamma$ be a non-positively curved, closed and connected Riemannian manifold which is topologically irreducible and which has no local Euclidean factors, and assume there exists an infinite number of $\Gamma$-distinct totally geodesic semi-diagonal $n$-submanifolds in $\widetilde M \times \widetilde M$. Then $M$ is locally symmetric.
\end{theorem}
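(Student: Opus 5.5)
We follow the outline in the introduction. For each of the infinitely many $\Gamma$-distinct totally geodesic semi-diagonal submanifolds $(S_k,g_{S_k},\iota_k)$, Lemma \ref{stretch_lemma} says that $F_k^{(i)}=\PR{i}\circ\iota_k : S_k\to\widetilde M$ is a diffeomorphism for $i=1,2$ and sends unit-speed geodesics to reparametrized geodesics. We therefore form
$$\Phi_k = F_k^{(2)}\circ \big(F_k^{(1)}\big)^{-1} : \widetilde M\to\widetilde M.$$
Its graph is $\iota_k(S_k)$, and $\Phi_k$ is a geodesic-preserving diffeomorphism of $\widetilde M$. The aim is to show that each $\Phi_k$ is an isometry, or at least a homothety. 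Being a diffeomorphism of a space admitting a cocompact lattice $\Gamma$, a homothety would then be an isometry, since its constant must be $1$ by a volume or compactness argument.

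To show $\Phi_k$ is an isometry, pull back the metric of $\widetilde M$ to $S_k$ by $F^{(1)}_k$ and by $F^{(2)}_k$. Both pullbacks, as well as $g_{S_k}$, have the same unparametrized geodesics, and in fact with affine parametrizations, because the stretch factors in Lemma \ref{stretch_lemma} are constant along each geodesic. Hence the metrics are affinely equivalent: they share the Levi-Civita connection, and therefore the holonomy group. Since $g_{S_k}=(F^{(1)}_k)^*g+(F^{(2)}_k)^*g$, both pullbacks are parallel symmetric $2$-tensors for a common connection.

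Decompose $\widetilde M$ by de Rham. Because there are no Euclidean factors, each factor is irreducible. If some factor is symmetric we handle it separately; Proposition \ref{symmetric_semi-diag} covers the case where $S_k$ itself is symmetric. Otherwise Berger's classification says the holonomy acts irreducibly on each de Rham factor. A parallel symmetric tensor is then a constant multiple of the metric on each factor, so $\Phi_k$ is a homothety on each factor. Topological irreducibility of $M$ should then force a single constant: a cocompact $\Gamma$ with factorwise different scalings would produce either a product structure on a finite cover or a non-isometric contraction, which is incompatible with cocompactness. This gives $\Phi_k\in\text{Isom}(\widetilde M)$. I expect this step to be the main obstacle, namely ruling out factorwise distinct constants and correctly handling the reducible and symmetric pieces of the holonomy.

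Finally, the $\Gamma$-distinctness hypothesis says the $\Phi_k$ lie in pairwise distinct double cosets $\Gamma\Phi_k\Gamma$. In particular $\text{Isom}(\widetilde M)$ contains infinitely many elements modulo $\Gamma$ on both sides. Since $\Gamma$ is cocompact, $\text{Isom}(\widetilde M)/\Gamma$ would be finite if $\text{Isom}(\widetilde M)$ were discrete: $\text{Isom}(\widetilde M)$ is a Lie group acting properly, so a discrete isometry group containing a cocompact $\Gamma$ contains it with finite index. Therefore $\text{Isom}(\widetilde M)$ is not discrete. The manifold $M$ is aspherical by Cartan–Hadamard, topologically irreducible by hypothesis, and by the Flat Torus Theorem $\pi_1(M)$ has no nontrivial normal abelian subgroups. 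Theorem \ref{farb-wein} then shows that $M$ is locally symmetric.
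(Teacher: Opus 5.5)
Your proposal is sound up to the last structural step, and for the holonomy step it is cleaner than the paper's. The paper makes $dF$ intertwine parallel transport and then uses the Berger--Simons dichotomy (holonomy is transitive on the unit sphere unless $S$ is symmetric) to make $\|dF(v)\|$ constant. That forces it to split off symmetric $S$ via Proposition \ref{symmetric_semi-diag} and to treat reducible $S$ separately. You instead observe that $g_{S_k}$, $(F^{(1)}_k)^*g$ and $(F^{(2)}_k)^*g$ share one Levi-Civita connection. This makes $\Phi_k$ an affine transformation of $\widetilde M$ with $\Phi_k^*g$ parallel, and Schur's lemma on the de Rham summands does the rest. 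You need neither Berger nor separate handling of symmetric factors. Irreducibility on each factor is de Rham's theorem, and with no flat factor the summands are pairwise inequivalent holonomy representations, which kills cross terms.

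The gap is the step you flagged. Schur only gives that $\Phi_k$ permutes the de Rham factors $M_1,\dots,M_r$ by some $\sigma_k$ and maps $(M_i,c_ig_i)$ isometrically onto $(M_{\sigma_k(i)},g_{\sigma_k(i)})$. Your claim that topological irreducibility forces a single constant, so that $\Phi_k\in\text{Isom}(\widetilde M)$, is not just unproved but false. Take $\widetilde M=(\mathbb H^2\times\mathbb H^2,\,g\oplus 4g)$ and $\Gamma$ a torsion-free irreducible cocompact lattice in $\mathrm{PSL}_2(\R)^2$. All hypotheses hold. The swap $(x,y)\mapsto(y,x)$, composed with arbitrary isometries, gives infinitely many $\Gamma$-distinct totally geodesic semi-diagonal graphs. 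Yet the swap pulls $g\oplus 4g$ back to $4g\oplus g$, so it is neither an isometry nor a homothety.

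Topological irreducibility is the wrong tool here. The missing ingredient is that a homothety $f$ of a complete non-flat Riemannian manifold has constant $1$. If $f^*g=cg$ with $c<1$, then $f$ is a contraction with a fixed point $x_0$. Writing $\kappa$ for the maximal absolute sectional curvature, $\kappa(x_0)=c^{-1}\kappa(x_0)$ gives $\kappa(x_0)=0$. Then $\kappa(f^mx)=c^{-m}\kappa(x)$ must tend to $\kappa(x_0)=0$, which forces $\kappa\equiv 0$; for $c>1$ apply this to $f^{-1}$. No de Rham factor is flat, and an affine map preserving every factor is a product of factorwise homotheties, so such a map is an isometry.

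To finish, pass to infinitely many $\Phi_k$ with the same permutation and fix one, $\Phi_{k_0}$. Each $\Phi_{k_0}^{-1}\Phi_k$ preserves every factor, hence is an isometry. The cosets $\Phi_{k_0}^{-1}\Phi_k\Gamma$ are pairwise distinct because the double cosets $\Gamma\Phi_k\Gamma$ are. So $[\text{Isom}(\widetilde M):\Gamma]=\infty$, and your final paragraph then applies unchanged.
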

\begin{proof}
    First, we begin by a series of reductions.\\ 
    
    We can automatically assume that all $S_k$ are non-symmetric, since if even one $S_k$ is a symmetric space then so too is $\widetilde M$ from Proposition \ref{symmetric_semi-diag}.\\


     We will also assume that each $S_k$ are metrically irreducible, then expand to the general case at the end.\\

    \noindent \textbf{Step I. F preserves parallel transport.} \\

    From Lemma \ref{stretch_lemma}, we know that $F$ sends geodesics to geodesics, changing the parameterization by a fixed factor of $\norm{d_pF(v)}$. We wish to show that $F$ reparameterizes each geodesic uniformly in every direction; that is, $\norm{d_pF(v)} = \norm{d_pF(w)}$ for all unit vectors $v,w \in T_p S$. To do so, we show that the parallel transport on $\widetilde M \times \widetilde M$ is left invariant by $F$. If $\beta$ is a path from $p$ to $q$, then we obtain an isometry $P_\beta^{\widetilde M \times \widetilde M} : T_p(\widetilde M \times \widetilde M) \rightarrow T_q(\widetilde M \times \widetilde M)$. Since the connection over a product manifold splits as $\nabla^{\widetilde M \times \widetilde M} = \nabla^{\widetilde M} \oplus \nabla^{\widetilde M}$, so too  does the parallel transport: $P_\beta^{\widetilde M \times \widetilde M} = P_\beta^{\widetilde M} \oplus P_\beta^{\widetilde M}$. Thus, we obtain the diagram
    
    \[\begin{tikzcd}
    	{T_p(\widetilde M \times \widetilde M)} & {} & {T_q(\widetilde M \times \widetilde M)} \\
    	{T_{\text{pr}_1(p)}\widetilde M} && {T_{\text{pr}_1(q)}\widetilde M}
    	\arrow["{P_\beta^{\widetilde M \times \widetilde M}}", from=1-1, to=1-3]
    	\arrow["{\text{pr}_1}"', from=1-1, to=2-1]
    	\arrow["{\text{pr}_1}", from=1-3, to=2-3]
    	\arrow["{P_\beta^{\widetilde M}}", from=2-1, to=2-3]
    \end{tikzcd}\]
    
    We wish to parallel transport vectors in $T_pS$ in order to relate $\norm{d_pF(v)}$ and $\norm{d_pF(w)}$. However, a priori we only know that parallel transport preserves $T_p(\widetilde M \times \widetilde M)$. Now, since $S$ is a totally geodesic submanifold of $\widetilde M \times \widetilde M$, the second fundamental form $\SFF(v) = \inner{B(v,v), \eta}$ vanishes for all $v\in T_pS, \eta \in T_p \left(\widetilde M \times \widetilde M\right)$. It follows that $P_\beta^{\widetilde M \times \widetilde M}(v) \in T_qS$, since if $V$ is the vector generated by parallel transport of $v$ along $\beta$, then $\nabla^{\widetilde M \times \widetilde M}_{\beta}V = \nabla_\beta^SV + B(V,V) = \nabla_\beta^S V$ the parallel transport of $v$ on $\widetilde M \times \widetilde M$ is identical to the parallel transport when restricted to $S$. Thus, we can restrict the domain and range of $P_\beta^{\widetilde M \times \widetilde M}$ to obtain
    
    \[\begin{tikzcd}
    	{T_pS} & {} & {T_qS} \\
    	{T_{\text{pr}_1(p)}\widetilde M} && {T_{\text{pr}_1(q)}\widetilde M}
    	\arrow["{P_\beta^{\widetilde M \times \widetilde M}}", from=1-1, to=1-3]
    	\arrow["{\text{pr}_1}"', from=1-1, to=2-1]
    	\arrow["{\text{pr}_1}", from=1-3, to=2-3]
    	\arrow["{P_\beta^{\widetilde M}}", from=2-1, to=2-3]
    \end{tikzcd}\]
    Now, it is easy to see that if we have $v,w\in T_pS$ with $\norm{v}_S = \norm{w}_S = 1$ with $P_\beta^{\widetilde M \times \widetilde M}v = w$, then 

    \begin{align*}
        \norm{dF_p(w)}_{\widetilde M} &= \norm{d(\text{pr}_1)_{f(p)}(w)}_{\widetilde M}\\
        &= \norm{d(\text{pr}_1)_{f(p)}(P_\beta^{\widetilde M \times \widetilde M}v)}_{\widetilde M}\\
        &= \norm{P_\beta^{\widetilde M} \circ dF_p(v)}_{\widetilde M}\\
        &=  \norm{dF_p(v)}_{\widetilde M}
    \end{align*}

    \noindent \textbf{Step II. Holonomy acts transitively on the sphere, so F acts uniformly on geodesics.} \\
    
    Now, let $C = \norm{dF_p(v)}_{\widetilde M}$. Then a corollary of Berger's classification of holonomy for metrically irreducible, simply connected manifolds is that either the holonomy group $\text{Hol}(S, g) \subseteq \text{SO}(n)$ acts transitively on the sphere, or $(S,g)$ is a symmetric space \cite{olmos_geometric_2005}. We have already assumed that $S$ is non-symmetric, so  we then have that there is some closed loop $\beta$ at $p$ so that $P^{\widetilde M \times \widetilde M}_\beta(v) = w$, and then $\norm{dF_p(v)}_{\widetilde M} = \norm{dF_p(w)}_{\widetilde M} = C$.\\
    
    Lastly, we note that given $p, q \in S$, there is a geodesic $\gamma$ from $p$ to $q$ with $\gamma(0) = p$ and $\gamma(L) = q$, by completeness. Then $\norm{d_pF(\gamma'(0))}_{S} = C$ so $F(\gamma(t)) = \gamma_1(Ct)$; thus, $d_qF(\gamma'(L)) = C$. Here $q$ is arbitrary, and thus $\norm{d_pF(v)}_{\widetilde M} = C$ for all $(p,v) \in T^1S$. Thus, we may conclude that for the metric $g_S$ for $S$, $F^*g_{\widetilde M} = C \cdot g_S$; that is, $F$ is a homothety from $S$ to $\widetilde M$.\\
    
    \noindent \textbf{Step III. F is an isometry.} \\
    
    Now, to prove $F$ is an isometry, the homothety gives an identification of \\
    $\left(\widetilde M,\text{pr}_1^*\left(g_{M\times M}|_{i(S)}\right)\right)$ (henceforth, this metric will be referred to as $g'$) with $(S, g_S) = (\widetilde M, C\cdot g_{\widetilde M})$. It follows that $F$ descends to a homothety between manifolds homeomorphic to $M$.\\
    
    Now, consider the set of all possible values of sectional curvature for $M$, the set $\text{Sec}(M) = \{\lambda_p(v \wedge w)\;|\; p\in S,\;\;v,w\in T_pM\}.$ Since $M$ is compact, we know that $\text{Sec}(M) = [-a, -b]$ for $a > b \ge 0$. Now, we must establish that $\text{Sec}(M\times M) = [-a, 0]$. If $v,w$ span a plane in $T_pS$, we may choose them to be orthonormal and decompose them into $v = v_1 + v_2$, $w = w_1 + w_2$ so that
    $$K_p(v \wedge w) = \frac{R(v,w,v,w)}{|v\wedge w|^2} = R(v_1,w_1,v_1,w_1) + R(v_2, w_2, v_2, w_2).$$

    This establishes that $K_p(v \wedge w) \le 0$. We further obtain that
    $$K_p(v \wedge w) \ge -a\left(|v_1 \wedge w_1|^2 + |v_2 \wedge w_2|^2\right).$$

    And we can see that 
    \begin{align*}
        |v \wedge w|^2 &\ge |v_1|^2|w_1|^2  + |v_2|^2|w_2|^2 \\
        &\ge |v_1|^2|w_1|^2 - \inner{v_1,w_1}^2 + |v_2|^2|w_2|^2 - \inner{v_2,w_2}^2 \\
        &=  |v_1 \wedge w_1|^2 + |v_2 \wedge w_2|^2
    \end{align*} 

    And so $\text{Sec}{(M\times M)} = [-a, 0]$. (Indeed both $-a$ and $0$ are obtained, $-a$ from a plane non-transverse to a factor and $0$ from a plane spanned by vectors lying in differing factors)
    
    Now, returning to our homothety we note that if $C \le 1$ then $C \cdot g_S = g'$, which implies that $[\frac{-a}{C^2}, \frac{-b}{C^2}] \subseteq [-a, 0]$. This then implies that $C$ must be $1$. Otherwise, we may consider $F^{-1}$, which is a homothety with constant $\frac{1}{C}$ and $C^2\cdot \text{Sec}(g') \subseteq [-a, -b]$, in which case yet again $C=1$. Thus, $F$ is in fact an isometry. \\

    \noindent \textbf{Step IV. Use Theorem \ref{farb-wein}.} \\

    We now have an infinite collection of isometries $f_i : \widetilde M \rightarrow \widetilde M$. Let $G = \text{Isom}(\widetilde M)$. Then we can consider the collection $[f_i] \in G / \Gamma$. Since $M / \Gamma$ is compact, $G / \Gamma$ is also compact. Further, since each $f_i$ are $\Gamma$-distinct, each $[f_i]$ are distinct. Thus $[f_i]$ admit a convergent subsequence which is not eventually constant, and $G = \text{Isom}(\widetilde M)$ is not a discrete group. Since $\Gamma$ contains no normal abelian subgroups and (by assumption) $M$ is not topologically irreducible, we can use Theorem \ref{farb-wein} to find that $\widetilde M$ is a symmetric space, finishing the proof in the metrically irreducible case.\\

    \noindent \textbf{Step V. The Case for Metrically Reducible $S$} \\

    Now, assume that $\widetilde S$ metrically decomposes into metrically irreducible Riemannian manifolds as $\widetilde S = \widetilde S_1 \times \cdots \times \widetilde S_N$. We still wish to show that $F : \widetilde S \rightarrow \widetilde M$ is an isometry. It is easy to see that the first step in the proof remains valid, so we can assume that $F$ preserves parallel transport. Thus, any isometry that may be induced by parallel transport on $\widetilde S$ may be induced on $\widetilde M$, and vice versa. It follows that $\text{Hol}(\widetilde M) = \text{Hol}(\widetilde S)$. We see that
    $$\text{Hol}(\widetilde S) = \text{Hol}(S_1) \times \cdots \times \text{Hol}(S_N).$$
    So $\text{Hol}(\widetilde M)$ decomposes in the same way. Since $\tilde M$ is simply connected and complete, the de Rham decomposition guarantees a decomposition of $\tilde M$ into factors $\tilde M_i$. It must be that $F$ sends $\widetilde S_i$ diffeomorphically onto a factor $\widetilde M_i$. Now we can use steps 2 and 3 to upgrade $F$ into an isometry on each factor. Thus, $F$ is an isometry. We can now use Theorem \ref{farb-wein} to obtain the desired result.
\end{proof}

\begin{corollary}
    Let $M$ be a closed, negatively curved Riemannian manifold, and assume there exists infinitely many totally geodesic semi-diagonal $n$-submanifolds in $M \times M$. Then $M$ is locally symmetric.
\end{corollary}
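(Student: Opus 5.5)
The plan is to reduce the corollary to the Main Theorem (the version stated at the start of this section) by lifting the given submanifolds of $M\times M$ to the universal cover $\widetilde M\times\widetilde M$. First I check the standing hypotheses. A closed negatively curved manifold is aspherical by Cartan--Hadamard. It has no local Euclidean factors: a Euclidean de Rham factor of $\widetilde M$ would yield flat planes, contradicting $K<0$. It is also topologically irreducible. If a finite cover were a product $N_1\times N_2$, then $\pi_1$ of that cover would contain $\mathbb{Z}^2$, generated by commuting infinite-order elements taken from the two factors. By Preissmann's theorem this is impossible in negative curvature. Each $\pi_1(N_i)$ is nontrivial, hence torsion-free and infinite, because $N_i$ is a closed aspherical manifold of positive dimension.

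Next, for each totally geodesic semi-diagonal $n$-submanifold $S_k\subset M\times M$, I take a connected component $\widetilde S_k$ of its preimage in $\widetilde M\times\widetilde M$. This is a complete, totally geodesic, immersed $n$-submanifold. Both properties are local, so it remains semi-diagonal, and Lemma \ref{stretch_lemma} applies to it.

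I then show that the lifts of distinct $S_k$ are $\Gamma$-distinct in the sense of Definition \ref{gamma-distinct}. Suppose $\gamma_2\cdot\widetilde S_j$ coincided with $\widetilde S_k$ after reparametrizing by some $\gamma_1$. Projecting to $M\times M$ under the covering by $\Gamma\times\Gamma$ would then give $\operatorname{Im}(S_j)=\operatorname{Im}(S_k)$. Since the $S_k$ are assumed pairwise distinct as immersed submanifolds, this is a contradiction. So infinitely many distinct $S_k$ produce infinitely many $\Gamma$-distinct lifts. The Main Theorem then gives that $M$ is locally symmetric.

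The step I expect to be the main obstacle is making this distinctness argument precise. Definition \ref{gamma-distinct} is phrased with $\gamma_1$ acting on the source $S$, which only makes sense after identifying $S$ with $\widetilde M$ via the diffeomorphism $F_1$ of Lemma \ref{stretch_lemma}. I would use that identification, so that the $\Gamma\times\Gamma$-action on images is exactly what is being compared. Then ``distinct downstairs'' is equivalent to ``not in the same $\Gamma\times\Gamma$-orbit upstairs.''

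A secondary point is how ``infinitely many submanifolds'' should be counted. Distinct immersed submanifolds of $M\times M$ could in principle share an image, for example finite covers of the same submanifold. I would interpret the hypothesis as infinitely many distinct images, or pass to an infinite subfamily with distinct images. Since a lift depends only on the image, this suffices.
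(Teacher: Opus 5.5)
Your proposal is correct and follows the same route as the paper: lift each $S_k$ to $\widetilde M\times\widetilde M$ and apply the Main Theorem. Your Preissmann argument for topological irreducibility, the remark on Euclidean factors, and the check that distinct images in $M\times M$ give lifts in distinct $\Gamma\times\Gamma$-orbits supply details the paper leaves implicit. One small wording fix: take a lift $\tilde\iota_k\colon\widetilde S_k\to\widetilde M\times\widetilde M$ of the immersion, rather than a connected component of the set-theoretic preimage, since that preimage can be a union of intersecting lifts when $\iota_k$ has double points.
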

\begin{proof}
    We can lift the immersions $S$ to obtain the diagram
\begin{equation}
\begin{tikzcd}
	{\widetilde S} & {\widetilde M \times \widetilde M} & {\widetilde M} \\
	S & {M \times M} & M
	\arrow["{\tilde\iota}", from=1-1, to=1-2]
	\arrow["F"{description}, curve={height=-24pt}, from=1-1, to=1-3]
	\arrow[from=1-1, to=2-1]
	\arrow["{\text{pr}_1}", from=1-2, to=1-3]
	\arrow["{p \times p}"{description}, from=1-2, to=2-2]
	\arrow["p", from=1-3, to=2-3]
	\arrow["\iota", from=2-1, to=2-2]
	\arrow["{\text{pr}_1}", from=2-2, to=2-3]
\end{tikzcd}
\end{equation}

Then we can apply Theorem \ref{main-theorem} to obtain that $M$ is locally symmetric.

To further prove that $M$ is arithmetic, we note that since $M/\Gamma$ is a closed manifold, it must be that $F$ commensurates $\Gamma$. Since the $F$ are $\Gamma$-distinct, it must be that $[\text{Comm}_G(\Gamma) : \Gamma] = \infty$. This implies $\Gamma$ is dense in $\text{Comm}_G(\Gamma)$ (Prop. 6.2.3 \cite{Zimmer1984ErgodicTA}) and so by Margulis Arithmeticity $\Gamma$ is an arithmetic lattice.
\end{proof}

\begin{theorem}
    Let $M$ be a negatively curved, closed and connected Riemannian manifold which is topologically irreducible and contains no local Euclidean factors. Assume there exists a dense, totally geodesic $n$-submanifold $S\subset M \times M$. Then $M$ is locally symmetric.
\end{theorem}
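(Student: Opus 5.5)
Our plan is to reduce to the situation of the main theorem: from the single dense submanifold $S$ we will manufacture infinitely many $\Gamma$-distinct totally geodesic semi-diagonal $n$-submanifolds of $\widetilde M \times \widetilde M$, and then apply the main theorem.

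First we lift. Let $\widetilde S$ be a component of the preimage of $S$ under the covering $\widetilde M \times \widetilde M \to M \times M$; it is a complete totally geodesic $n$-submanifold. We need $\widetilde S$ to be semi-diagonal. The statement does not assume transversality explicitly, so we argue it from density. If at some point $S$ failed to be transverse to a factor, say $T_{(p,q)}S$ contained a nonzero vector $(v,0)$, then the geodesic of $S$ in that direction would be of the form $(\alpha(t), q)$. Totally geodesic $n$-submanifolds of $M\times M$ that contain such a degenerate direction should split off a factor direction, because parallel transport preserves $TS$ and the splitting $T\widetilde M\oplus T\widetilde M$ (as in Step I of the main proof). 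Using that $M$ has no Euclidean factors and is irreducible, we would then show that $S$ lies in a proper product-type subset, which contradicts density. We expect this part to be a little delicate; if it is too hard, we will add transversality as a standing hypothesis, matching the corollary's formulation.

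Next, with $\widetilde S$ semi-diagonal, Lemma \ref{stretch_lemma} shows that $F_1,F_2$ are diffeomorphisms. Steps I--III and V of the main proof then show that $f = F_2\circ F_1^{-1}$ is an isometry of $\widetilde M$, so $\widetilde S$ is the graph of $f$. (Note that Steps I--III handle a single submanifold; the infinitude was only used in Step IV.) The translates $(\gamma_1,\gamma_2)\cdot \widetilde S$ are the graphs of $\gamma_2 f \gamma_1^{-1}$. The image of $\widetilde S$ in $M\times M$ is $S$, and $S$ is dense. This forces the set $\Gamma f \Gamma$ to be dense in the graphs space in the following sense: for any point $(x,y)\in \widetilde M\times\widetilde M$, some $\gamma_2 f\gamma_1^{-1}$ maps a point near $x$ near $y$. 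If $\Gamma f\Gamma$ were a discrete subset of $G=\text{Isom}(\widetilde M)$, its image in $\Gamma\backslash G$ would be closed and discrete. Then the orbit $\{(\gamma_1 x, \gamma_2 f x)\}$ would descend to a closed $n$-dimensional subset of $M\times M$, namely a finite union of closed immersed copies. That would contradict density, since $S$ is $n$-dimensional inside a $2n$-dimensional manifold and so has empty interior if closed.

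Hence $\text{Isom}(\widetilde M)$ is not discrete. Equivalently, the double coset $\Gamma f \Gamma$ contains infinitely many distinct classes in $G/\Gamma$, giving infinitely many $\Gamma$-distinct totally geodesic semi-diagonal submanifolds. Either way, Theorem \ref{farb-wein} applies, since the hypotheses of asphericity, topological irreducibility and the absence of normal abelian subgroups hold via the Flat Torus Theorem. It yields that $M$ is locally symmetric. The main obstacle is the first step, extracting semi-diagonality (transversality) from density alone; the non-discreteness argument is comparatively formal.
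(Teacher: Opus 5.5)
\textbf{The gap is your first step, and it cannot be closed.} Semi-diagonality does not follow from density. In fact the statement with no transversality hypothesis is false.

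Take $n=2$ and let $M$ be a closed surface of genus at least $2$ carrying a negatively curved metric of non-constant curvature. Such an $M$ satisfies every hypothesis but is not locally symmetric. Let $\gamma_1,\gamma_2$ be dense geodesics; these exist because the geodesic flow is topologically transitive. Then $(s,t)\mapsto(\gamma_1(s),\gamma_2(t))$ is a totally geodesic immersion of the flat plane into $M\times M$. Its image $\gamma_1(\R)\times\gamma_2(\R)$ is dense, and its tangent planes contain $(\gamma_1',0)$, so it is not transverse to $M\times\{q\}$.

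This is exactly the ``split'' situation your sketch arrives at, but there is no contradiction to extract. A product of dense totally geodesic pieces is still dense, and the irreducibility and no-Euclidean-factor hypotheses constrain $M$, not such products. So your fallback is not optional: transversality to the projections must be assumed, as in item (3) of the paper's corollary. The paper's own proof uses it tacitly, since it appeals to the proof of the main theorem from Lemma \ref{stretch_lemma} onward, and that needs semi-diagonality.

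\textbf{With transversality assumed, your argument is essentially the paper's.} Steps I--III and V give an isometry. Density then shows $\text{Isom}(\widetilde M)$ is not discrete, and Theorem \ref{farb-wein} finishes. The paper argues $[\inner{\Gamma,F}:\Gamma]=\infty$, since finite index would make $S$ compact. You instead use closedness of the union of the graphs of $\Gamma f\Gamma$. Three points need fixing.

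First, run the contrapositive from ``$G=\text{Isom}(\widetilde M)$ is discrete''. Then $[G:\Gamma]<\infty$, so $\Gamma f\Gamma/\Gamma$ is finite, $f$ commensurates $\Gamma$, and $S$ is compact, contradicting density. A merely discrete but non-closed subset $\Gamma f\Gamma$ would not make $S$ closed.

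Second, the translates $(\gamma_1,\gamma_2)\cdot\widetilde S$ are, by Definition \ref{gamma-distinct}, precisely \emph{not} $\Gamma$-distinct, so you cannot invoke the main theorem verbatim. What you do have is infinitely many classes $\gamma f\Gamma$ in $G/\Gamma$, which is all Step IV really uses. Going straight to Theorem \ref{farb-wein}, as you do, is the correct route.

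Third, your claim that the isometry is $f=F_2\circ F_1^{-1}$, rather than $F_1$ itself, is the right formulation. For a unit vector $v$ one has $|dF_1v|^2+|dF_2v|^2=1$, so each $F_i$ is a strict contraction. The curvature comparison of Step III should therefore be run for the homothety $f$ of $\widetilde M$. Its ratio is forced to be $1$ because it preserves the curvature range $[-a,-b]$ of $\widetilde M$ with $a>0$.
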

\begin{proof}
    We can lift the map $\iota$ to the universal cover $\widetilde S$ to obtain the diagram (1) from the proof of Corollary \ref{first-theorem}. From the proof of Theorem \ref{main-theorem}, we can assume $F$ is an isometry, i.e. $F\in \text{Isom}(\widetilde M)$. Now, let $H = \inner{\Gamma, F} \le \text{Isom}(\widetilde M)$. Now, if it were the case that $[H : \Gamma] < \infty$, then the volume of $S \subset M \times M$ would be a finite multiple of the volume of $M$. However, if that were the case the image $\iota(S)$ would be compact. Since $\iota(S)$ is dense, it must be that $[H : \Gamma] = \infty$. Thus, $\text{Isom}(\widetilde M)$  is not discrete and so, according to Theorem \ref{farb-wein}, $\widetilde M$ is a symmetric space. 
\end{proof}

\section{The General Case}\label{general_case_section}

\subsection{Eliminating Irreducibility Requirements}

In the statement of the main theorem, we require two assumptions. Firstly, the manifold $M$ is topologically irreducible and secondly $M$ contains no local Euclidean factors. Both conditions can be relaxed, at a cost.

From the proof of Theorem \ref{main-theorem}, we see that all semi-diagonal submanifolds on a non-positively curved manifold $M$ arise from isometries of the universal cover $\widetilde M$. On the other hand, a product of tori $\T^n \times \T^n$ contains an infinite number of semi-diagonal submanifolds which do not arise as isometries of the universal cover. In particular, for any linear transformation $A\in \text{GL}_n(\R)$, the submanifold $\text{Graph}(A) \subset \R^n \times \R^n$ descends to a semi-diagonal submanifold in $\T^n\times \T^n$. So any torus factor immediately gives an infinite number of semi-diagonal manifolds. 

For the condition of topologically irreducibility, \cite{farb_isometries_2008} contains a discussion of the more general case. In particular, Proposition 3.1 of \cite{farb_isometries_2008} claims that in the case $\text{Isom}(M)$ is semisimple (i.e. has semisimple Lie algebra) with finite center, then $M$ is finitely covered by a warped product $F \times_f B$, where $F$ is a nontrivial, nonpositively curved locally symmetric space. Further, Proposition 3.3 of \cite{farb_isometries_2008} claims that if $M$ contains no local Euclidean factors then $\text{Isom}(\widetilde M)$ is indeed semisimple with finite center. From this discussion, we see that we may completely relax the conditions on $M$, decompose $M = \T^n \times N$ and apply Proposition 3.1 of \cite{farb_isometries_2008} to $N$ to obtain a complete classification. 

The general idea is to simply split off the troublesome flat factor. However, in order to then determine the warped product structure on the negatively curved factor, we must be able to restrict both the codomain and domain of the maps induced by the totally geodesic semi-diagonal submanifolds. This requires proving that the maps themselves split. That is, if $F : \R^n \times \widetilde N \rightarrow \R^n \times \widetilde N$ is an induced map then we can write $F = (F_1, F_2)$ for $F_1 : \R^n \rightarrow \R^n$ and $F_2 : \widetilde N \rightarrow \widetilde N$.

\begin{lemma}\label{seperation_lemma}
    If $\widetilde S$ is a totally geodesic semi-diagonal $n$-submanifold of $\widetilde M \times \widetilde M$, with $\widetilde M = \R^n \times \widetilde N$, and $F : \widetilde S \rightarrow \R^n \times \widetilde N$ is the induced map, then $\widetilde S$ is isometric to $\R^n \times \widetilde N$ and we can write $F(p, q) = ((F_1(p), F_2(q))$ where $F_1 \in \text{Aff}(\R^n)$ and $F_2 \in \text{Isom}(\widetilde N)$.
\end{lemma}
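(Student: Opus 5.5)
We will run the proof of Theorem \ref{main-theorem} again, this time keeping track of the de Rham splitting. Write $\widetilde M = \R^n \times \widetilde N$ (here $n$ denotes the flat rank, not $\dim M$). Let $\widetilde N = \widetilde N_1 \times \cdots \times \widetilde N_k$ be the de Rham decomposition of $\widetilde N$ into non-flat irreducible factors.

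\textbf{Step 1: $F$ is a diffeomorphism that preserves geodesics and parallel transport.} By Lemma \ref{stretch_lemma}, $F = \PR{1}\circ\iota$ is a diffeomorphism onto $\widetilde M$. It maps geodesics of $\widetilde S$ to reparametrized geodesics of $\widetilde M$. Step I of the main proof shows that $dF$ intertwines parallel transport on $\widetilde S$ with parallel transport on $\widetilde M$. Two consequences follow:
\begin{itemize}
\item $dF$ conjugates $\text{Hol}(\widetilde S)$ to $\text{Hol}(\widetilde M)$;
\item $dF$ carries the $\text{Hol}(\widetilde S)$-invariant splitting of $T\widetilde S$ onto a parallel, holonomy-invariant splitting of $T\widetilde M$.
\end{itemize}
The flat factor is characterized as the subspace on which holonomy acts trivially, so $dF$ maps the flat distribution of $\widetilde S$ onto the $\R^n$-distribution of $\widetilde M$. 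It likewise permutes the non-flat irreducible parallel distributions. Both $\widetilde S$ and $\widetilde M$ are simply connected and complete, and the relevant distributions are parallel and integrable with totally geodesic leaves. De Rham's theorem then gives $\widetilde S = \R^m \times \widetilde S'$ with $m = n$. Moreover, $F$ maps leaves to leaves, which should give $F(p,q) = (F_1(p,q), F_2(p,q))$, where $F_1$ depends only on $p$ and $F_2$ only on $q$.

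The point needing care is this separation of variables. Because $F$ sends the $\R^n$-leaves $\R^n\times\{q\}$ to $\R^n$-leaves and the $\widetilde S'$-leaves to $\widetilde N$-leaves, the component $\PR{\widetilde N}\circ F$ is constant along $\R^n$-leaves and $\PR{\R^n}\circ F$ is constant along $\widetilde S'$-leaves. That is exactly the product form.

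\textbf{Step 2: identify $F_1$ and $F_2$.} The restriction $F_1 : \R^m \to \R^n$ is a diffeomorphism mapping straight lines to straight lines, and it does so with affine parametrization, since each geodesic is rescaled linearly by Lemma \ref{stretch_lemma}. A map of this kind is affine. Next, $F_2$ is a geodesic-preserving diffeomorphism $\widetilde S' \to \widetilde N$ that preserves parallel transport, and it respects the irreducible factors.

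On each irreducible factor $\widetilde S'_j$ we apply Steps II and III of the main proof, with two cases:
\begin{itemize}
\item If $\widetilde S'_j$ is non-symmetric, Berger transitivity makes $F_2$ a homothety on that factor. The sectional curvature comparison forces the constant to equal $1$. This comparison uses compactness of $M$: the $\widetilde N_j$ factors have curvature bounds coming from the compact quotient, and the curvature of $S$ is bounded by that of $M\times M$.
\item If $\widetilde S'_j$ is symmetric, we instead use the conjugation trick of Proposition \ref{symmetric_semi-diag} factorwise to control that factor. If needed, we also use that a parallel-transport-preserving, geodesic-preserving map between irreducible symmetric spaces of noncompact type is a homothety, because it intertwines the Cartan decompositions.
\end{itemize}
Assembling the factors gives $F_2 \in \text{Isom}(\widetilde N)$.

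\textbf{Step 3: the isometry claim and the main obstacle.} For the statement that $\widetilde S$ is isometric to $\R^n\times\widetilde N$, the flat part of $\widetilde S$ is isometric to $\R^n$ because any flat complete simply connected manifold is. The remaining part is isometric to $\widetilde N$ via $F_2$.

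I expect the main obstacle to be the homothety-constant argument of Step III applied factor by factor. The curvature ranges must be read off on a single irreducible factor of $\widetilde N$ rather than on all of $M$. A product of factors could have overlapping curvature ranges, so the constants $C_j$ must be pinned down individually. I would first try bounding $\text{Sec}$ of each factor of $\widetilde S'$ from above by $\max \text{Sec}(\widetilde N_j) < 0$. That maximum is attained somewhere on the factor, and it is strictly negative for a non-flat irreducible factor with a cocompact quotient, up to checking that the maximum is actually negative rather than just nonpositive. We would then compare the curvature ranges under scaling, as in Step III.
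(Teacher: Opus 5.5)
Your Step 1 follows the paper's argument: holonomy-equivariance of $dF$ forces it to respect the flat/non-flat de Rham splitting. You carry it out more carefully, and the leaf argument for the product form is fine, as is your direct proof that $F_1$ is affine. The genuine gap is in Step 2, at ``the sectional curvature comparison forces the constant to equal $1$''. This is Step III of the main proof, which the paper's own proof of this lemma also relies on, and it is false.

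Put $G=\PR{2}\circ\iota$. The induced metric is $g_S=F^*g_{\widetilde M}+G^*g_{\widetilde M}$. Transversality to $\widetilde M\times\{q\}$ gives $\ker dG=0$ by a dimension count, so $\|dF(v)\|<\|v\|_S$ for every $v\neq 0$. Hence any homothety $F^*g_{\widetilde M}=C\,g_S$ has $C<1$, and $F_2$ is never an isometry onto $\widetilde N$ unless $\widetilde N$ is a point. (This is also why the constants in Lemma \ref{stretch_lemma} satisfy $a^2+b^2=1$, not $a,b>1$.) The diagonal shows the picture: $\widetilde S\cong(\widetilde M,2g)$, $C=\tfrac12$, and $\text{Sec}(\widetilde S)=\tfrac12\,\text{Sec}(\widetilde M)\subset[-a,0]$. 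Total geodesicity in $M\times M$ only bounds $C$ from above and can never give $C=1$. The same computation refutes your Step 3 claim that $\widetilde S'$ is isometric to $\widetilde N$ via $F_2$. Your proposed repair also fails. There is no reason for $\text{Sec}(\widetilde S'_j)\le\max\text{Sec}(\widetilde N_j)$; the diagonal violates it whenever the right side is negative. Moreover, $\max\text{Sec}(\widetilde N_j)<0$ is false in general, e.g.\ for an irreducible higher-rank factor, which contains flats.

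What is actually rigid is the graph map $f=G\circ F^{-1}:\widetilde M\to\widetilde M$, and the curvature comparison must be made inside $\widetilde M$. Since $F$ and $G$ both carry affinely parametrized geodesics to affinely parametrized geodesics, $f$ preserves the Levi-Civita connection. Your Step 1 then applies verbatim, giving $f=(f_1,f_2)$ with $f_1$ affine. You need neither Berger nor a symmetric/non-symmetric case split: $f^*g$ is a parallel symmetric $2$-tensor, so by Schur's lemma $f_2$ carries each irreducible factor $\widetilde N_j$ homothetically, with constant $\lambda_j$, onto some factor $\widetilde N_{\sigma(j)}$. Correspondingly, $\widetilde S\cong(\widetilde M,g+f^*g)$ is only factorwise homothetic to $\widetilde M$.

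If $\sigma(j)=j$, then $\inf\text{Sec}(\widetilde N_j)\in(-\infty,0)$ forces $\lambda_j=1$; it is finite by cocompactness and negative because the factor is non-flat. But when $\sigma$ permutes factors (the case you flagged), you only get that the product of the $\lambda_j$ around each cycle is $1$, and this is sharp. Take $\widetilde N=(\mathbb{H}^2\times\mathbb{H}^2,\,g_0\oplus\tfrac14 g_0)$, which covers $\Sigma\times\Sigma$ for a closed hyperbolic surface $\Sigma$. The swap $(x,y)\mapsto(y,x)$ preserves the connection, so its graph is a totally geodesic semi-diagonal submanifold, yet it pulls the metric back to $\tfrac14 g_0\oplus g_0$. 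So $F_2\in\text{Isom}(\widetilde N)$ needs an extra hypothesis ruling this out, for instance that no two de Rham factors of $\widetilde N$ are homothetic. Otherwise it must be weakened to ``factorwise homothety''.
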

\begin{proof}

The proof is very similar to Step 5 of Theorem \ref{main-theorem}. The map $F = \text{pr}_2 \circ \iota : \tilde S \rightarrow \R^n\times \tilde N$ is still a diffeomorphism and preserves parallel transport, so we may identify $\tilde S$ diffeomorphically with $\R^n \times \tilde N$ and $F$ becomes a smooth map from $\R^n \times \widetilde N$ to itself. Now, $F^{-1}$ also preserves parallel transport and is an equivariant map with respect to the holonomy representation of $\R^n \times \widetilde N$. Thus, it must send the flat factor $\R^n$ to itself and it must send the non-positively curved factor $\tilde N$ to itself. So we can write $F(p, q) = (F_1(p), F_2(q))$, where $F_1 : \R^n \rightarrow \R^n$ and $F_2 : \widetilde N \rightarrow \widetilde N$.


Finally, for $F_2$ we know from the proof of \ref{main-theorem} to see that $F_2 \in \text{Isom}(\widetilde N)$. For $F_1$, we know it must preserve geodesics and so $F_1 \in \text{Aff}(\R^n)$ by the Fundamental Theorem of Affine Geometry \cite{scherk_affine_theorem_1962}.
\end{proof}

\begin{theorem}
    Let $M$ be a non-positively curved, closed and connected Riemannian manifold and assume that $\widetilde M\times \widetilde M$ contains infinitely many totally geodesic, semi-diagonal $n$-submanifolds which are $\pi_1(M)$-distinct. Then $M$ admits a finite cover which can be written as 
    $$\T^n \times (X \times_f B),$$
    such that $B \times B$ contains only finitely many semi-diagonal totally geodesic submanifolds and $X$ is a (possibly trivial) locally symmetric space.
\end{theorem}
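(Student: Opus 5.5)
We first split off the flat part of $M$. By the de Rham decomposition, $\widetilde M = \R^k \times \widetilde N$, where $\widetilde N$ has no Euclidean factor. By the Flat Torus Theorem (Chapter 7 of \cite{bridson_metric_2011}), after passing to a finite cover we may write $M = \T^k \times N$ with $N = \widetilde N/\Gamma_N$ closed, non-positively curved, and without local Euclidean factors. (The statement writes the torus as $\T^n$; we read this as a torus of whatever dimension the flat factor has.) Now take the infinitely many $\pi_1(M)$-distinct totally geodesic semi-diagonal submanifolds $\widetilde S_k \subset \widetilde M \times \widetilde M$. By Lemma \ref{seperation_lemma}, each induced map $F^{(k)}$ splits as $(F^{(k)}_1, F^{(k)}_2)$ with $F^{(k)}_1 \in \text{Aff}(\R^k)$ and $F^{(k)}_2 \in \text{Isom}(\widetilde N)$.

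Next we decide how the infinitude is distributed between the two factors. The problem is that $\Gamma$ need not split as a product, but after a further finite cover we arrange $\Gamma = \Z^k \times \Gamma_N$. We then look at the classes $[F^{(k)}_2] \in \text{Isom}(\widetilde N)/\Gamma_N$.

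\textbf{Case A: infinitely many of these classes are distinct.} By compactness of $\text{Isom}(\widetilde N)/\Gamma_N$, as in Step IV of the proof of Theorem \ref{main-theorem}, $\text{Isom}(\widetilde N)$ is non-discrete. Proposition 3.3 of \cite{farb_isometries_2008} shows that $\text{Isom}(\widetilde N)$ is semisimple with finite center, since $N$ has no local Euclidean factors. Proposition 3.1 of \cite{farb_isometries_2008} then gives a finite cover $N' = X \times_f B$ with $X$ a nontrivial non-positively curved locally symmetric space.

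\textbf{Case B: only finitely many classes occur.} Here $\text{Isom}(\widetilde N)$ may be discrete, and we take $X$ trivial and $B = N$. The infinitely many distinct $\widetilde S_k$ are then absorbed by the affine maps of the torus factor. This is consistent with the torus example $\text{Graph}(A)$ discussed earlier in this section.

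It remains to show that $B \times B$ contains only finitely many semi-diagonal totally geodesic submanifolds. Suppose instead that infinitely many $\pi_1(B)$-distinct ones exist in $\widetilde B \times \widetilde B$. Applying the argument of Theorem \ref{main-theorem} (Steps I--III and V), each of them comes from an isometry of $\widetilde B$. Infinitely many distinct classes in $\text{Isom}(\widetilde B)/\pi_1(B)$ would make $\text{Isom}(\widetilde B)$ non-discrete. The Farb--Weinberger dichotomy (their Theorem 1.2, applied to $B$) would then give $B$ a further locally homogeneous, hence locally symmetric, fiber. That fiber could be absorbed into $X$, contradicting the choice of $X$ as the maximal locally symmetric fiber.

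To make this rigorous, we choose $X$ to be of maximal dimension among all such decompositions $X \times_f B$. Alternatively, we invoke the part of Proposition 3.1 of \cite{farb_isometries_2008} asserting that $\text{Isom}(\widetilde B)$ is discrete, which is the statement that $B$ is the "base orbifold" with discrete isometry group. We expect this maximality step to be the main obstacle. A secondary subtlety is passing to finite covers: we must check that the warped product structure and the finiteness count are unaffected. We would handle this by observing that finite covers change the $\Gamma$-distinctness classes only by finite amounts.
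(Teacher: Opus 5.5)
\textbf{There is a gap, in the clause that $B\times B$ contains only finitely many semi-diagonal totally geodesic submanifolds.} Your reduction is exactly the paper's proof: split off $\R^k$, apply Lemma \ref{seperation_lemma}, split into cases on the classes $[F^{(j)}_2]$, and invoke Propositions 3.3 and 3.1 of \cite{farb_isometries_2008}. The paper stops there and never justifies the finiteness clause, and your attempt to supply it does not work as written.

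\emph{Case B is false as stated.} Its hypothesis only controls the $\widetilde N$-components of the \emph{given} submanifolds, not all semi-diagonal submanifolds of $\widetilde N\times\widetilde N$. Take $M=\T^k\times N$ with $N$ a closed arithmetic hyperbolic manifold. Use the graphs of $A_j\times\text{id}_{\widetilde N}$ for distinct $A_j\in\text{GL}_k(\R)$. These are totally geodesic, semi-diagonal and $\pi_1(M)$-distinct, since deck translations do not change the linear part. All of them have $F_2=\text{id}$, so you are in Case B. Yet $\pi_1(N)$ has infinite index in its commensurator, so with $X$ trivial and $B=N$ the graphs of commensurating isometries give infinitely many closed semi-diagonal totally geodesic submanifolds of $B\times B$. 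The paper's proof makes the same choice and has the same problem. So $X$ must be chosen maximal in both cases, which makes the case split irrelevant for this clause.

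\emph{The maximality argument does not close as written.} Theorem 1.2 of \cite{farb_isometries_2008} only gives a locally homogeneous fiber. To get a locally symmetric one you must apply Propositions 3.3 and 3.1 to $B$, which requires knowing that $B$ is non-positively curved with no local Euclidean factor. Absorbing $X'$ into $X$ also requires the warping function $f$, which lives on $B\cong X'\times_{f'}B'$, to be independent of the $X'$-coordinate; nothing you have shown gives this. Your alternative is not available either: as Proposition 3.1 is quoted in the paper, it only produces the warped product. Discreteness of $\text{Isom}(\widetilde B)$ is precisely what needs proof.

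\emph{The missing idea: in non-positive curvature these warped products are trivial.} For the metric $f(b)g_X+g_B$, put $\phi=\sqrt f$. A plane spanned by a horizontal unit vector $V$ and a vertical vector has sectional curvature $-\text{Hess}\,\phi(V,V)/\phi$. Hence $\phi$ is convex on the closed manifold $B$, so it is bounded and convex along every complete geodesic, therefore constant. Every decomposition is then a Riemannian product $X\times B$ after rescaling $X$. It follows that $B$ is closed and non-positively curved, and has no local Euclidean factor, since one would be a Euclidean de Rham factor of $\widetilde N$.

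With this, maximality of $\dim X$ closes the argument. Suppose $\widetilde B\times\widetilde B$ contained infinitely many $\pi_1(B)$-distinct such submanifolds. Steps I--III and V make them graphs of isometries lying in infinitely many double cosets $\pi_1(B)\,g\,\pi_1(B)$, so $\text{Isom}(\widetilde B)$ is non-discrete. Propositions 3.3 and 3.1 then give a finite cover $X'\times B'$ of $B$ with $X'$ nontrivial and locally symmetric. Then $(X\times X')\times B'$ contradicts maximality of $\dim X$.

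Alternatively, take $\widetilde X$ to be the product of all symmetric de Rham factors of $\widetilde N$ and $\widetilde B$ the product of the rest. Eberlein's results on lattices in Hadamard manifolds then say that a finite-index subgroup of $\pi_1(N)$ splits accordingly and that $\text{Isom}(\widetilde B)$ is discrete.
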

\begin{proof}
    Apply Lemma \ref{seperation_lemma} to obtain $\widetilde M \cong \R^n \times \widetilde N$ so that $\T^n \times N$ finitely covers $M$. Then, as in the proof of the main theorem, we may obtain maps $F : \R^n \times \widetilde N \rightarrow \R^n \times \widetilde N$. We can restrict these maps in both domain and range to get an infinite sequence of maps corresponding to $\pi_1(M)$ distinct semi-diagonal submanifolds in $\widetilde N \times \widetilde N$. In the case that after projection there exists only a finite number of distinct maps, then $B = \widetilde N$ and $F = \{*\}$. Otherwise, the isometry group of $N$ is semisimple and non-discrete, so by Proposition 3.1 of \cite{farb_isometries_2008} it admits a finite cover which is a warped product $F \times_f B$, where $F$ is a nontrivial symmetric space.
\end{proof}

\subsection{Generalizing to Arbitrary Products}

Throughout the proof of Theorem \ref{main-theorem}, we note that we choose one factor to project down, and the composition of the projection and inclusion map yields an isometry $F : \widetilde S \rightarrow \widetilde M$. However, we do not use the fact that the product $\widetilde M \times \widetilde M$ has isometric factors. Thus, we can make a slightly more general statement by relaxing this hypothesis, and looking at submanifolds inside products $M_1 \times M_2$. 

\begin{theorem}
    For $i=1,2$, let $M_i$ are non-positively curved closed Riemannian manifolds with $M_i = \widetilde M_i / \Gamma_i$. If $M_1 \times M_2$ contains infinitely many totally geodesic, semi-diagonal $n$-submanifolds, then $\widetilde M_1 \cong \widetilde M_2$, both are locally symmetric, and $\Gamma_1$ and $\Gamma_2$ are commensurable lattices.
\end{theorem}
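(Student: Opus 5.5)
We follow the proof of Theorem \ref{main-theorem} with the two factors no longer assumed equal. We lift each semi-diagonal totally geodesic $S_k \subset M_1 \times M_2$ to $\widetilde S_k \subset \widetilde M_1 \times \widetilde M_2$, as in diagram (1). We then consider both projections $F_k^{(i)} = \PR{i}\circ\tilde\iota_k : \widetilde S_k \to \widetilde M_i$. The proof of Lemma \ref{stretch_lemma} uses only that each factor is simply connected and non-positively curved. So each $F_k^{(i)}$ is a diffeomorphism taking geodesics to reparametrized geodesics, and $\Phi_k = F_k^{(2)}\circ (F_k^{(1)})^{-1} : \widetilde M_1 \to \widetilde M_2$ is a diffeomorphism whose graph is $\tilde\iota_k(\widetilde S_k)$. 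As in the paper's reductions, if some $\widetilde S_k$ is symmetric, Proposition \ref{symmetric_semi-diag} (applied to each factor) makes both $\widetilde M_i$ symmetric. Otherwise we run Steps I, II and V for each projection separately. Steps I and II are factorwise: parallel transport splits on $\widetilde M_1\times\widetilde M_2$ exactly as before, and Berger's theorem applies to the irreducible de Rham factors of $\widetilde S_k$. This shows that each $F_k^{(i)}$ is a homothety on every de Rham factor, with constant $C_{k,j}^{(i)}$.

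The main obstacle is Step III, upgrading these homotheties to isometries. The curvature-range argument there compared $\text{Sec}(M)$ with $\text{Sec}(M\times M)$ and relied on both factors having the same range $[-a,-b]$. Now we have $\text{Sec}(M_1) = [-a_1,-b_1]$, $\text{Sec}(M_2)=[-a_2,-b_2]$, and $\text{Sec}(M_1\times M_2) = [-\max(a_1,a_2), 0]$. I would first decompose a tangent vector of $\widetilde S_k$ into its two components. The metric is $g_S = (F^{(1)})^*g_1 + (F^{(2)})^*g_2 = (C^{(1)}+C^{(2)})\,(F^{(1)})^*g_1$ on each factor, and hence $\Phi_k$ itself is a homothety on each factor with ratio $C^{(2)}/C^{(1)}$. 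A homothety need not be an isometry, so I will not try to prove $\Phi_k$ isometric directly. Instead I would use compactness. Let $G_i = \text{Isom}(\widetilde M_i)$ and let $\text{Hom}(\widetilde M_1, \widetilde M_2)$ be the homotheties. Since $\widetilde M_1$ is cocompact and not flat (no Euclidean factors in the relevant case), the possible ratios are pinned down by the curvature bounds: $\lambda^{-2}\,\text{Sec}(M_2) = \text{Sec}(M_1)$ as sets of attained values, since both are realized over fundamental domains. So there is a single ratio $\lambda$ up to factor-permutation. Composing, $\Phi_k^{-1}\Phi_l$ is an isometry of $\widetilde M_1$ for all $k,l$. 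Fixing $\Phi_1$ and rescaling the metric of $M_2$ by $\lambda^2$ (which does not affect total geodesicity or semi-diagonality) gives $\widetilde M_1 \cong \widetilde M_2$ isometrically via $\Phi_1$.

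Next we use $\Gamma$-distinctness. The elements $\Phi_1^{-1}\Phi_k \in G_1$ are infinitely many, and they are distinct modulo $\Gamma_1$ on the right and $\Phi_1^{-1}\Gamma_2\Phi_1$ on the left, because distinct $S_k$ are not related by $\Gamma_1\times\Gamma_2$. Closedness of $S_k$ means its stabilizer $\Gamma_1 \cap \Phi_k^{-1}\Gamma_2\Phi_k$ is cocompact in $\widetilde S_k \cong \widetilde M_1$, and hence of finite index in both $\Gamma_1$ and $\Phi_k^{-1}\Gamma_2\Phi_k$. This shows that $\Gamma_1$ and $\Phi_1^{-1}\Gamma_2\Phi_1$ are commensurable, and that every $\Phi_1^{-1}\Phi_k$ commensurates $\Gamma_1$. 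Infinitely many of these elements are distinct modulo $\Gamma_1$, so $[\text{Comm}_{G_1}(\Gamma_1):\Gamma_1]=\infty$, and in particular $G_1$ is non-discrete, as in Step IV. Under the standing hypotheses (topological irreducibility, no Euclidean factors), Theorem \ref{farb-wein} makes $\widetilde M_1$, and hence $\widetilde M_2 \cong \widetilde M_1$, symmetric. The $\Gamma_i$ are lattices because the $M_i$ are closed, and their commensurability was obtained above; Margulis arithmeticity via Prop. 6.2.3 of \cite{Zimmer1984ErgodicTA} then gives arithmeticity if desired.
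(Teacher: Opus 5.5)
Your argument is correct, modulo the small repairs below and under the hypotheses of Theorem \ref{main-theorem}, which you and the paper both import. It has the same skeleton as the paper's proof. You run the main-theorem machinery on each projection, obtain non-discreteness of $\text{Isom}(\widetilde M_1)$ and apply Theorem \ref{farb-wein}, and read off commensurability from closedness of $S_k$. Your finite-index stabilizer $\Gamma_1\cap\Phi_k^{-1}\Gamma_2\Phi_k$ is exactly the paper's observation that $S_k$ finitely covers both $M_1$ and $M_2$.

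The real difference is that the paper calls the first two conclusions ``immediate'' from the proof of Theorem \ref{main-theorem}, whereas you carry out the adaptation, and it is not verbatim.
\begin{itemize}
\item The maps $\Phi_k=F^{(2)}_k\circ(F^{(1)}_k)^{-1}$ go between different spaces and are a priori only homotheties, so Step IV cannot be run on them directly. Your passage to $\Phi_1^{-1}\Phi_k\in\text{Isom}(\widetilde M_1)$, which commensurate $\Gamma_1$ with infinite index (as in the paper's arithmeticity argument), is what makes it work.
\item Likewise, $\widetilde M_1\cong\widetilde M_2$ can only hold up to homothety, and your rescaling by $\lambda^2$ makes this explicit. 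For example, let $M_1$ be arithmetic hyperbolic and let $M_2$ be $M_1$ with its metric multiplied by $4$. Graphs of commensurator elements give infinitely many closed totally geodesic semi-diagonal submanifolds of $M_1\times M_2$, yet $\widetilde M_1$ and $\widetilde M_2$ are not isometric.
\item You are also right to add topological irreducibility and the absence of Euclidean factors, which the statement omits. Take $M_1=M_2=N\times\T^k$ with $N$ not locally symmetric, and let $\Delta_N\subset N\times N$ be the diagonal. Then the submanifolds $\Delta_N\times\text{Graph}(A)$, $A\in\text{GL}_k(\mathbb Z)$, contradict the conclusion.
\end{itemize}

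The repairs are minor.
\begin{itemize}
\item Your identity for $g_S$ should read $(F^{(i)})^*g_i=C^{(i)}g_S$ on each de Rham factor, with $C^{(1)}+C^{(2)}=1$. This gives the ratio $C^{(2)}/C^{(1)}$ you use.
\item In the metrically reducible case, $\Phi_k^{-1}\Phi_l$ is an isometry only when $\Phi_k$ and $\Phi_l$ induce the same correspondence of de Rham factors. So first pass to an infinite subsequence on which this correspondence is constant.
\item Berger's transitivity fails on a higher-rank symmetric factor of a non-symmetric $\widetilde S_k$; this gap is inherited from the paper's Step V. Instead, apply Schur's lemma to the forms $(F^{(i)})^*g_i$, which are parallel and hence holonomy-invariant. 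This gives the factorwise homothety directly and removes the need for the symmetric/non-symmetric case split.
\end{itemize}
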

\begin{proof}
    It is clear that if we have such manifolds, then from the proof of Theorem \ref{main-theorem} it is immediate that $\widetilde M_1 \cong \widetilde M_2$ and that $M_1$ and $M_2$ are locally symmetric. Lastly, given any totally geodesic, semi-diagonal submanifold $S$ in $M_1 \times M_2$, $S$ finitely covers both $M_1$ and $M_2$ via the projection maps. So $M_1$ and $M_2$ share a common finite cover, and therefore the corresponding lattices $\Gamma_1$ and $\Gamma_2$ are commensurable.
\end{proof}

Another easy generalization in the same direction is to look at more than two products of manifolds, i.e. looking at $n$-dimensional totally geodesic semi-diagonal submanifolds of the $kn$ dimensional manifold $M_1 \times \cdots \times M_k$. Luckily the notion of semi-diagonality continues to apply here; it still makes sense to ask for a submanifold to be transverse to the projection maps $\text{pr}_i$. One may then run through the argument from Theorem \ref{main-theorem} after projecting onto a factor in order to get a quick, slight improvement of the main theorem.

\begin{corollary}\label{arbitraryproductcorollary}
    Let $M_i = \widetilde M_i / \Gamma_i$, for $i = 1,\cdots , k$, be non-positively curved, closed and connected Riemannian $n$-manifolds which are topologically irreducible and have no Euclidean factors. Assume there exists an infinite number of $\left(\Gamma_1 \times \cdots \times \Gamma_k\right)$-distinct totally geodesic, semi-diagonal $n$-submanifolds in $\widetilde M_1 \times \cdots \times \widetilde M_k$. Then the universal covers $\widetilde M_i$ are all isometric, each manifold $M_i$ is locally symmetric, and all lattices $\Gamma_i$ are commensurable.
\end{corollary}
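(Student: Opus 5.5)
The plan is to reduce to the two-factor case already handled, by projecting onto pairs of factors. Let $\widetilde S_j$, $j\in\mathbb N$, be the given $(\Gamma_1\times\cdots\times\Gamma_k)$-distinct totally geodesic semi-diagonal $n$-submanifolds of $\widetilde M_1\times\cdots\times\widetilde M_k$, with immersions $\iota_j$.

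\textbf{Step 1: each projection is an isometry.} Let $F^i_j=\PR{i}\circ\iota_j:\widetilde S_j\to\widetilde M_i$. The proof of Lemma \ref{stretch_lemma} only used three facts: geodesics in a product split into factor geodesics, $\PR{i}$ is a Riemannian submersion, and $\widetilde M_i$ is a Hadamard manifold. All three hold verbatim for a $k$-fold product. Hence each $F^i_j$ is a diffeomorphism that sends geodesics to reparametrized geodesics. Steps I--III and V of the proof of Theorem \ref{main-theorem} also go through unchanged, for the following reasons. Parallel transport in the $k$-fold product splits as a direct sum. $\widetilde S_j$ is totally geodesic, so it is preserved by ambient parallel transport. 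The Berger/de Rham argument happens entirely on $\widetilde S_j$. The curvature comparison only uses that sectional curvatures of the product lie in $[-a,0]$, where $a$ is the maximum over the factors; taking $a$ as this maximum, the same squeeze should force $C=1$. I expect this last point to be the delicate step, since with different factors the pinching intervals differ. I would first run the homothety argument for $F^i_j$ against the curvature range of $\widetilde S_j$ itself, and otherwise compare $\widetilde M_i$ and $\widetilde M_l$ through the homothety $F^l_j\circ(F^i_j)^{-1}$. The outcome is that every $F^i_j$ is an isometry. In particular all the $\widetilde M_i$ are isometric to $\widetilde S_1$, and hence to each other.

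\textbf{Step 2: non-discreteness.} Fix the identification $\widetilde M_i\cong\widetilde M_1$ given by $\phi_i=F^i_1\circ(F^1_1)^{-1}$. Then each $\widetilde S_j$ is the graph of the tuple of isometries $(F^2_j(F^1_j)^{-1},\dots,F^k_j(F^1_j)^{-1})$. By pigeonhole, some pair of indices $(1,i)$ must have infinitely many $(\Gamma_1\times\Gamma_i)$-distinct projected graphs. Indeed, if every pairwise projection took only finitely many values up to $\Gamma_1\times\Gamma_i$, then the tuple would take only finitely many values up to $\Gamma_1\times\cdots\times\Gamma_k$. That would contradict distinctness, after accounting for the reparametrization by $\Gamma_1$ in Definition \ref{gamma-distinct}, which acts diagonally in the same way.

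Then Step IV applies to the isometries $\phi_i^{-1}F^i_j(F^1_j)^{-1}\in\text{Isom}(\widetilde M_1)$. Their classes in the compact space $\text{Isom}(\widetilde M_1)/\Gamma_1$, or the double-coset version, are infinitely many, so $\text{Isom}(\widetilde M_1)$ is not discrete. Theorem \ref{farb-wein} applies, because $M_1$ is topologically irreducible and, by the Flat Torus Theorem, $\Gamma_1$ has no normal abelian subgroups. It shows that $\widetilde M_1$ is symmetric, hence every $\widetilde M_i$ is symmetric, and each $M_i$ is locally symmetric.

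\textbf{Step 3: commensurability.} Transport all the $\Gamma_i$ into $G=\text{Isom}(\widetilde M_1)$ via the $\phi_i$. A single closed semi-diagonal submanifold in $M_1\times\cdots\times M_k$ finitely covers each $M_i$ through the projections, as in the preceding theorem, so this gives the commensurability. In the universal-cover setting I would instead argue as follows. Every $\widetilde S_j$ is a $\Gamma_i$-orbit-closed object, and its stabilizer in $\Gamma_1\times\cdots\times\Gamma_k$ is a lattice in the graph. The projections of this stabilizer to each factor then have finite index in $\Gamma_i$. This exhibits a common finite-index subgroup, up to conjugation by the isometries $F^i_j$, so all the $\Gamma_i$ are commensurable. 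I expect this stabilizer-being-a-lattice point to need the closedness of the image; if that is unavailable, I would fall back on stating commensurability up to conjugation in $G$.
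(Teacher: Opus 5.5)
Your overall route (project to each factor, rerun Lemma~\ref{stretch_lemma} and Steps I--V, apply Theorem~\ref{farb-wein}, and get commensurability from a closed submanifold finitely covering each $M_i$) is what the paper's one-line sketch intends. However, several of your steps fail as written.

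\textbf{Step 1.} Its outcome is false. The metric on $\widetilde S_j$ is induced from the product, so $g_{\widetilde S_j}=\sum_i (F^i_j)^*g_{\widetilde M_i}$. If every $F^i_j$ were an isometry, this would say $g_{\widetilde S_j}=k\,g_{\widetilde S_j}$; for the diagonal, each projection has homothety constant $1/k$. The paper's Step III contains the same slip. When $\widetilde S_j$ is irreducible and non-symmetric, Steps I--II give $(F^i_j)^*g_{\widetilde M_i}=\lambda^i_j\,g_{\widetilde S_j}$ with $\sum_i\lambda^i_j=1$. So $F^l_j\circ(F^i_j)^{-1}$ is a homothety with constant $\lambda^l_j/\lambda^i_j$. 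The curvature argument then only shows that a homothety of $\widetilde M_1$ \emph{to itself} is an isometry, since otherwise it would rescale $\text{Sec}(M_1)=[-a,-b]$ with $a>0$.

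Between different factors no squeeze can do better. Take $M_1$ closed hyperbolic and $M_2=(M_1,4g_1)$. The graphs of the isometries of $\mathbb{H}^n$, viewed as maps $\widetilde M_1\to\widetilde M_2$, are totally geodesic and semi-diagonal, and they fall into infinitely many $(\Gamma_1\times\Gamma_2)$-classes. Yet $\widetilde M_1$ and $\widetilde M_2$ (curvatures $-1$ and $-1/4$) are not isometric. So the step you flagged as delicate cannot be completed: that conclusion holds only up to homothety, and your $\phi_i$ are homotheties.

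\textbf{Step 2.} The maps $\phi_i^{-1}F^i_j(F^1_j)^{-1}$ are still self-homotheties of $\widetilde M_1$, hence isometries, so this step can be rescued. Its pigeonhole claim, however, is false, because the $(1,i)$-equivalences may use different elements of $\Gamma_1$. For instance, take $k=3$, $\Gamma_i=\Gamma<\text{Isom}(\mathbb{H}^n)$, and $g,g'$ with $\Gamma\cap g^{-1}\Gamma g=\Gamma\cap g'^{-1}\Gamma g'=\{1\}$. The graphs of $(g,g'\gamma)$, $\gamma\in\Gamma$, are pairwise $\Gamma^3$-distinct, while all their $(1,2)$- and $(1,3)$-projections are equivalent. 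Replace the pigeonhole by a direct argument. If $D=\text{Isom}(\widetilde M_1)$ were discrete, then $\Gamma_1$ and every $\phi_i^{-1}\Gamma_i\phi_i$ would have finite index in $D$. The tuples $\bigl(\phi_i^{-1}F^i_j(F^1_j)^{-1}\bigr)_i\in D^{k-1}$ would then lie in finitely many orbits, contradicting distinctness.

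\textbf{Step 3.} This cannot be carried out in the universal-cover setting, because commensurability is false there. Take two closed hyperbolic $n$-manifolds, $n\ge 3$, with non-commensurable fundamental groups (one arithmetic, one not). The graphs of all isometries of $\mathbb{H}^n$ give uncountably many $(\Gamma_1\times\Gamma_2)$-distinct totally geodesic semi-diagonal submanifolds. Yet $\Gamma_1,\Gamma_2$ are not commensurable, even up to conjugation. Concretely, the stabilizer of the graph of $h$ is $\{(\gamma,h\gamma h^{-1}):\gamma\in\Gamma_1\cap h^{-1}\Gamma_2h\}$, which is trivial for generic $h$. So neither your claim that the stabilizer is a lattice in the graph nor your fallback survives.

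The paper's justification for this clause comes from the preceding two-factor theorem: a closed $S\subset M_1\times\cdots\times M_k$ finitely covers each $M_i$. That requires the extra hypothesis that some $\widetilde S_j$ covers a closed submanifold of the quotient. Under that hypothesis, the first sentence of your Step 3 is exactly the paper's argument and suffices.
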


\section{Other Questions}

In everything preceding this section, we assume that our totally geodesic, semi-diagonal submanifolds $S$ have the same dimension as the factors of the product inside of which it sits, and this assumption is important in most of the proofs. One might then be interested in the case of submanifolds of varying dimensions. That is, we could study manifolds of dimension, say, $n-1$ or $n+1$ inside $M\times M$. One expects that lowering the dimension would give us less information and thus we would have a weaker result (or potentially no result at all), and raising the dimension would lead to a similar (or perhaps even stronger) result.

\begin{question}
    If $M, N$ are closed, topologically irreducible non-positively curved Riemannian manifolds and $M \times N$ contains infinitely many totally geodesic semi-diagonal $k$-submanifolds, for $n < k < 2n$, are $M$ and $N$ locally symmetric?
\end{question}

For the lowering of dimension, there is a straightforward counterexample to Corollary \ref{arbitraryproductcorollary} in the case of submanifolds of dimension $n-2$. First, we can consider an arithmetic manifold; for example, we can let $G = \text{SO}_{n,1}(\R)$ for $n> 2$ and $\Gamma < G$ be a cocompact, arithmetic lattice; then $\Gamma \backslash G / K$ is a closed arithmetic manifold. Now let $S \le G$ be $\text{SO}_{n-1,1}(\R)$; since $\Gamma$ is arithmetic, $\text{Comm}_G(\Gamma)$ is infinite by the Margulis theorem. The cosets of $S$ under the quotient map $G/K \rightarrow \Gamma\backslash G/K$ give rise to infinitely many distinct immersed submanifolds $[xS] \subseteq \Gamma\backslash G / K$. Next, we can find a manifold $N$ which is not locally symmetric, not a warped product, and contains a hyperbolic $(n-1)$-plane $N' \subseteq N$; say, for example, by perturbing the metric on a hyperbolic manifold away from a hyperbolic hypersurface. Then for each hyperbolic space $xS$, we have an isometric identification $f_x : xS \rightarrow \widetilde N'$, where $\widetilde N'$ is the lift of $N'$ to $\widetilde N$. Thus, the product $\widetilde M \times \widetilde N$ contains infinitely many totally geodesic $(n-2)$-submanifolds which are transverse to factors, yet $N$ is not locally symmetric (or even a warped product). Of course, this is a simple example but it immediately leads to the more interesting question:

\begin{question}
    Do there exist non-locally symmetric, topologically irreducible non-positively curved closed manifolds $\widetilde M$, $\widetilde N$ which contain infinitely many totally geodesic semi-diagonal $k$ submanifolds for some $1 < k < n$?
\end{question}

Lastly, a reasonable extension of this theorem would be to prove a version for positively curved manifolds or lift the curvature restriction altogether. However, the proof of Theorem \ref{main-theorem} is doomed to fail. Firstly, Theorem \ref{farb-wein} of Farb and Weinberger requires asphericality of the universal cover, and so does not apply to any positively curved symmetric spaces. One may still hope that, at the very least, semi-diagonal submanifolds of $M \times M$ or $\widetilde M \times \widetilde M$ still arise from isometries. However, this fact is also in jeopardy; the first statement in Lemma \ref{stretch_lemma} requires the fact that the exponential map is a diffeomorphism on a non-positively curved manifold in order to prove that $F = \text{pr}_i \circ \iota$ is a diffeomorphism. We can conclude that any similar statement would use very different tools than those used in this paper.

\newpage

\printbibliography

\end{document}